\documentclass[a4paper, 11pt]{article}
\usepackage[utf8]{inputenc}
\usepackage{hyperref}
\usepackage{amssymb,amsmath,amsfonts,mathdots,amsthm,textcomp,enumerate,mathrsfs,eufrak,version,mathtools} 
\usepackage{upgreek}
\usepackage{caption}
\usepackage{array}
\usepackage{lipsum}
\usepackage{multirow}
\usepackage{vmargin}
\usepackage{titlesec}
\usepackage{eurosym}
%\usepackage{refcheck}
%\usepackage{fontspec}
%\newfontfamily\ubuntumono{Ubuntu Mono}
%\usepackage{marvosym}
\usepackage[english]{babel}
\usepackage[nottoc,notlot,notlof]{tocbibind}
\usepackage[titletoc,toc,title]{appendix}
\titleformat{\subsubsection}{\Large\scshape\raggedright}{}{0em}{}[\titlerule]
\usepackage[final]{pdfpages}
\usepackage{colortbl}
\usepackage{url}
\usepackage[T1]{fontenc}
\usepackage{color}
\usepackage[all,cmtip]{xy}
\usepackage{tikz}
\usetikzlibrary{arrows.meta}
\tikzset{%
  >={Latex[width=2mm,length=2mm]},
  % Specifications for style of nodes:
            base/.style = {rectangle, rounded corners, draw=black,
                           minimum width=4cm, minimum height=1cm,
                           text centered, font=\sffamily},
}
\newtheorem{theo}{Theorem}[section]

\newtheorem{prop}[theo]{Proposition}
\newtheorem{lem}[theo]{Lemma}

%\setmarginsrb{1.5cm}{-2.5 cm}{1.5cm}{1cm}{2cm}{2cm}{2cm}{0.5cm}
%\setmarginsrb{1.5cm}{-3cm}{1.5cm}{1cm}{2cm}{2cm}{2cm}{0.5cm}
%\usepackage{showframe} 
%\usepackage[T1]{fontenc}

\newcommand{\ch}[1]{{}^\vee#1}

\newcommand{\R}{\mathbb{R}}

\newcommand{\g}{\mathfrak g}

\newcommand{\C}{\mathbb{C}}

\title{Micro-packets containing generic representations}

\author{N. Arancibia Robert}

\begin{document}
\maketitle

\begin{abstract}
For a real group $G$, it is known from the work of Kostant and Vogan that the L-packet associated with an L-parameter $\varphi$ of $G$ contains a \emph{generic} representation if and only if the ${}^{\vee}G$-orbit in the variety of geometric parameters corresponding to $\varphi$ is open. In these notes, we generalize this result slightly by proving that the same equivalence holds when the L-packet of $\varphi$ is replaced by the micro-packet attached to $\varphi$ by Adams–Barbasch–Vogan. As a corollary, we deduce the Enhanced Shahidi Conjecture for real groups: the Arthur packet attached to an A-parameter $\psi$ of $G$ contains a generic representation if and only if $\psi|_{\mathrm{SL}_2}$ is trivial.
\end{abstract}

\tableofcontents

\section{Introduction}

%denote the real form 
%in the inner class whose Cartan involution is $\theta_\delta = \mathrm{Int}(\delta)$. 
%fixed-point subgroup
%We fix $\delta_q$ to be a strong involution corresponding to the \textbf{quasisplit real} form of $G$.
%Let $\delta$ be a pure strong involution of $G^{\Gamma}$.  
Let $G$ be a connected reductive algebraic group defined over $\mathbb{R}$. 
In \cite{Langlands}, Langlands defined a partition of the set 
\(\Pi(G(\mathbb{R}))\) 
of (infinitesimal equivalence classes of) irreducible admissible representations of \(G(\mathbb{R})\) into \emph{L-packets}.  
These L-packets, denoted \(\Pi_{\varphi}(G(\mathbb{R}))\), are parameterized by 
L-parameters, i.e., 
${}^{\vee}G$-conjugacy classes of L-homomorphisms
\[
\varphi: W_{\mathbb{R}} \longrightarrow {}^\vee G^\Gamma,
\]
from the real Weil group \(W_{\mathbb{R}}\) of \(\Gamma = \mathrm{Gal}(\mathbb{C}/\mathbb{R})\) to the L-group 
${}^\vee G^\Gamma = {}^\vee G \rtimes \Gamma$, where ${}^\vee G$ is the Langlands dual group of \(G\) (\cite[Section~8]{Borel}).
We denote by $P\left({}^\vee G^\Gamma\right)$ the set of L-homomorphisms, by  
$\Phi\left({}^\vee G^\Gamma\right)$ the set ${}^{\vee}G$-conjugacy classes on $P\left({}^\vee G^\Gamma\right)$, and, as usual, we refer to the map
\begin{align}\label{eq:LLC}
\varphi\in \Phi\left({}^\vee G^\Gamma\right) \longmapsto \Pi_\varphi(G(\R)),
\end{align}
as the \emph{Local Langlands correspondence}.
%In 
%this notes we follow 
%\cite{ABV} point of vu and rather than fixing 
%a single pure real form and study its representation, we 
%consider all pure
%real forms $\delta\in G^{\Gamma}$ and study the sets 
%$\Pi(G(\R, \delta))$
%all at once.  For this, we denote
%$\Pi(G/\R)$ (\cite[Lemma 1.15]{ABV}), 
%for the set of  irreducible admissible representations %of pure real forms of \(\mathrm{G}\).
%at once
%we study  
%point of vue
%we study repre
%and study 
%In this manner,
%rather than focusing on 
%a $L$-packet for a single pure real form $G(\R,\delta)$, we work with compound packets, which we
%continue to call $L$-packets
%\[
%\Pi_{\varphi}(G/\R) := \bigsqcup_{\delta}\, \Pi_\varphi(\delta),
%\]
%where $\delta$ varies over the inner class of pure real forms of $G$ fixed at the begining.
%consider packet of representations over
%several real forms appearing in the inner class of G(R). We continue to call these compound packets “L-packets”, 
Building on this framework, Adams, Barbasch and Vogan \cite{ABV} propose a reformulation of the Langlands correspondence using geometric tools.  
Their approach introduces a variety \(X({}^\vee G^\Gamma)\) that reparametrizes the set of L-homomorphisms.  
More precisely, \({}^\vee G\) acts on \(X({}^\vee G^\Gamma)\), and its orbits \(S \subset X({}^\vee G^\Gamma)\) are in bijection with the set \(\Phi({}^\vee G^\Gamma)\), providing a more geometric interpretation of the 
Local Langlands correspondence
\begin{align}\label{eq:ABVLLC}
\Pi_\varphi(G(\mathbb{R})) \longleftrightarrow \varphi \longleftrightarrow S_\varphi.
\end{align}
Moreover, in their quest to prove the Local Arthur conjectures \cite{Arthur84,Arthur89} 
and to define Arthur's conjectural A-packets for real groups, 
each ${}^\vee G$-orbit $S \subset X({}^\vee G^\Gamma)$ is attached to a packet 
$\Pi_S^{\mathrm{mic}}(G(\mathbb{R}))$ of irreducible admissible representations of $G(\mathbb{R})$. 
This packet, called a \emph{micro-packet} in \cite[Definition 19.15]{ABV}, 
is constructed using microlocal geometric methods and, among other properties, contains the L-packet 
\(\Pi_S(G(\mathbb{R}))\) corresponding to the \({}^\vee G\)-orbit \(S\) under \eqref{eq:ABVLLC}.

We now suppose that $G$ is quasisplit, and recall that an irreducible representation $\pi$ of $G(\R)$ is called \emph{generic} if it admits a Whittaker model (see for example \cite[Definition 3.1]{ABV}).  
It follows from \cite[Proposition 1.11]{ABV} together with the description of generic representations in \cite{Kostant78} and \cite{Vogan78} that an L-packet has a generic member if and only if the corresponding ${}^{\vee}G$-orbit $S \subset X({}^\vee G^\Gamma)$ is open.

The objective of these notes is to extend this result to the setting of micro-packets.  
More precisely, we prove in Theorem~\ref{theo:ESC} below that: \begin{quote}The micro-packet 
$\Pi_S^{\mathrm{mic}}(G(\mathbb{R}))$ contains a generic member 
if and only if the %corresponding 
${}^{\vee}G$-orbit $S$ is open.\end{quote}  

Since, by  \cite[Section 4]{Arthur89}, to any A-parameter $\psi$ there corresponds an L-parameter 
$\varphi_\psi$, and therefore, under \eqref{eq:ABVLLC}, a ${}^{\vee}G$-orbit 
$S_\psi \subset X({}^\vee G^\Gamma)$, it follows that by showing $S_\psi$ is open if and only if 
$\psi|_{\mathrm{SL}_2}$ is trivial, %immediately implies, 
we obtain, as a corollary, the Enhanced Shahidi Conjecture  \cite[Conjecture 1.6]{LiuShahidi}
for archimedean fields:  
\begin{quote}
For any quasi-split reductive group  $G$ defined over $\R$, the Arthur packet $\Pi_\psi(G(\R))$
has a generic member if and only if $\psi|_{\mathrm{SL}_2}$ is trivial.
\end{quote}
%Let us finish this introduction by saying a little
%about the methods involved in the proof of the previous statement, and how these notes are organized.
%the preceding statement, and by outlining the organization of these notes.
%We conclude this introduction by 

Let us briefly discuss the methods involved in the proof of Theorem~\ref{theo:ESC}.  
First, it suffices to establish the forward direction.  
Indeed, by \cite[Lemma~19.14(a)]{ABV}, for an open orbit \( S \), we have
\[
\Pi_S(G(\mathbb{R})) = \Pi_S^{\mathrm{mic}}(G(\mathbb{R})),
\]  
where \( \Pi_S(G(\mathbb{R})) \) denotes the \( L \)-packet of the \( L \)-parameter corresponding to \( S \) under \eqref{eq:ABVLLC}. 
As a consequence, a micro-packet containing a generic member reduces to the corresponding \( L \)-packet.  
Micro-packets are defined using the notion of characteristic cycle of a \( D \)-module.  
To prove the forward implication in Theorem~\ref{theo:ESC}, we explicitly compute these characteristic cycles in the case of \( D \)-modules associated to generic representations.
To do so, we exploit, in the case of representations with regular infinitesimal character, the existence of an action of the (integral) Weyl group on 
both the domain \cite[Definition 7.2.28]{greenbook} and codomain \cite{Hotta85}, \cite{Rossmann95} of the characteristic cycle map, which makes this map equivariant. 

The definition of a micro-packet is reviewed in Section~2, where 
we also describe the behavior of these packets under the translation principle.  
In Section~3, we study the action of the Weyl group that makes the characteristic cycle map equivariant. 
Building on the results of Section~3, we compute, in Theorem~\ref{theo:ccforlarge}, 
the characteristic cycle of the irreducible $D$-module corresponding to a generic 
representation under the local Langlands correspondence. 
As a corollary, we obtain Theorem~\ref{theo:ESC}, together with the Enhanced Shahidi Conjecture.

For \( p \)-adic groups, the Enhanced Shahidi Conjecture has been proved for the symplectic and split odd special orthogonal groups in~\cite{HLL,HLLZ}.  
Assuming the \( p \)-adic Kazhdan-Lusztig hypothesis, a \( p \)-adic version of Theorem~\ref{theo:ESC} is established in~\cite{cunninghamWhittaker,cunninghamBalodis}.

%Theorem \ref{theo:ESC} together with 
%the Enhanced Shahidi conjecture.
%The definition of a micro-packet is reviewed in Section 2, where 
%we also describe the behaviour of these packets under the translation principle. 
%In Section 3 we study the W-action making the CC map equivariant.
%Building on the results of section 3 we compute at the begining of Section
%4 the characteristic cycle of a generic representation, and as a corollary we obtain 
%\eqref{} and the Enhanced Shahidi conjecture.
%The only implication that needs to be veridied is the direct one, 
%since by \cite[Lemma 19.14.a)]{ABV} for an open orbit $S$ we have the equality $\Pi_S=\Pi_S^{\mathrm{mic}}$. 
%Micro-packets are defined using the notion of characteristic cycle.
%To prove the  direct implication  we are going to explicitely compute 
%these characteristic cycles in the case of a genereic representation. 
%The direct implica follows from the fact that the characteristic cycle of the D-module corresponding under the LLC to a generic 
%representation is equal to the conormal bundle of the orbit supporting the D-module. 
%This can be proved, for example, using Vogan–Zuckerman’s coherent continuation for representations with regular infinitesimal character, together with the translation principle in the singular case

\section{Micro Packets}
In this section we recall the definition of a micro–packet given in \cite[Definition~19.15]{ABV}.  
For this purpose, we first review the reformulation and enhancement of the Langlands correspondence presented in \cite[Theorem~10.4]{ABV}.  After the notion of micro–packet has been introduced, we conclude 
%We close the section 
with a brief overview of the translation principle  
and its role in reducing  
the description of micro–packets with singular infinitesimal character to the case of regular infinitesimal character. %to the case where the infinitesimal character is regular.

To better present the theory developed in \cite{ABV}, we adopt their framework.  
That is, rather than fixing a single real form $G(\mathbb{R})$ and studying its representations, 
we consider all pure real forms appearing in the inner class of $G(\mathbb{R})$ and study their corresponding 
sets of irreducible representations simultaneously.
This will also simplify the exposition of the connections between representations and the geometric objects studied in \cite{ABV}.

Let \(G\) be a connected reductive complex algebraic group. 
We fix an inner class of real forms of \(G\) (\cite[Equation 2.4]{ABV}). This inner class is determined by a unique quasisplit 
real form \(\delta_q\) fixing a pinning 
\begin{equation}
  \label{pinning}
  \left(B,T,\{X_{\alpha}\}\right),
\end{equation}
where \(B\) is a Borel subgroup, \(T \subset B\) is a maximal torus, and
\(\{X_{\alpha}\}\) is a set of simple root vectors relative to the positive root system
\(R^{+}(G,T) = R(B,T)\) of \(R(G,T)\). We will also consider the Langlands dual group \( {}^{\vee}G \), whose root datum 
is dual to that of \( G \):
%We will be considering also 
%the Langlands dual group ${}^{\vee}G$, whose root datum is dual to that of $G$:
%fix a pinning
\[
{}^{\vee}G \supset {}^{\vee}B \supset {}^{\vee}T,\quad 
R^{+}({}^{\vee}G, {}^{\vee}T) = \{ \alpha^{\vee} :\, \alpha \in R^{+}(G,T) \},\
%\quad\{ X_{^{\vee}\alpha} : \alpha \in \Delta(B,T) \}.
\]
%dual to \eqref{pinning}, for the Langlands dual group ${}^{\vee}G$ of $G$.
Starting from the quasisplit real form $\delta_q$, we enlarge \(G\) to define the extended group
\begin{equation}\label{eq:extendedgroup}
  G^\Gamma = G \rtimes \langle \delta_q \rangle,
\end{equation}
and, following  \cite[Definition~2.13]{ABV}, we call an element $\delta \in G^\Gamma \,-\, G$ such that $\delta^2 \in Z(G)$ has finite order 
%is called 
a \emph{strong real form} of $G^\Gamma$.  
Since our main result only pertains to the quasi-split real form $\delta_q$, 
in these notes we restrict our attention to the strong real forms 
$\delta \in G^\Gamma$ satisfying $\delta^2 = 1$.  
These are the \emph{pure strong real forms} \cite[\S2]{vogan_local_langlands}.
To each such $\delta$, we denote by $G(\mathbb{R},\delta)$  the group
of points in \(G\) fixed by \(\delta\). 

%Putting together the partitions provided by the Langlands correspondence for the different sets 
%$\Pi(G(\mathbb{R},\delta))$, we recover a partition of the set $\Pi(G/\mathbb{R})$ of irreducible admissible representations 
%of where $\delta$ ranges over the 
%of equivalence classes of pure strong real forms of $G^{\Gamma}$ \cite[Lemma~1.15]{ABV}. 

Putting together the partitions provided by the Langlands correspondence for the different sets  
\( \Pi(G(\mathbb{R}, \delta)) \), we recover a partition of the set \( \Pi(G/\mathbb{R}) \) of irreducible admissible representations  of $G$-conjugacy classes of pure strong real forms of \( G^{\Gamma} \)~\cite[Lemma~1.15]{ABV}. In this manner,  
we extend \eqref{eq:LLC} by attaching to each L-parameter $\varphi\in\Phi({}^{\vee}G^{\Gamma})$ a \emph{compound packet},
 which we continue to call an L-packet
\[
\Pi_{\varphi}(G/\mathbb{R}) := \bigsqcup_{\delta\in \Sigma} \Pi_\varphi(G(\R,\delta)),
\]
where $\Sigma$ is a set of representatives for the $G$-conjugacy classes 
of pure strong real forms of \(G^{\Gamma}\).
%instead of 
%attaches to each $L$-parameter %an L-packet for a single pure real form \(G(\mathbb{R},\delta)\), 
%it associates 
%\[
%{}^\vee G^\Gamma := {}^\vee G \rtimes \Gamma,
%\]
%We denote by \(\Phi(G)\) the set of L-parameters of \(G\), 
%and, as usual, refer to the map
%\[
%\varphi \longmapsto \Pi_\varphi(\delta)
%\]
%as the \emph{Local Langlands correspondence}.
%In these notes, we follow the point of view of \cite{ABV}. 
%Rather than fixing a single pure real form and studying its representations, 
%we consider all pure real forms \(\delta \in G^\Gamma\) simultaneously, 
%and study the corresponding sets 
%\(\Pi(G(\mathbb{R}, \delta))\) all at once. 
%To this end, we denote by 
%\(\Pi(G/\mathbb{R})\) (\cite[Lemma 1.15]{ABV}) 
%the set of irreducible admissible representations of all pure real forms of \(G\).
%Let's move on to explore \citep{ABV}  geometric reformulation of this correspondance. 
Let us move on to explore the geometric reformulation of this correspondence developed in \cite{ABV}.  
For this, we fix a semisimple element $\lambda \in {}^{\vee}\mathfrak{g}$. After
conjugating by $\ch G$ we may assume $\lambda\in {}^{\vee}\mathfrak{t}$.  Using the
canonical isomorphism ${}^{\vee}\mathfrak{t}\simeq{}^{\vee}\mathfrak{t}^*$ we identify $\lambda$ with an
element of ${}^{\vee}\mathfrak{t}^*$, and hence via the Harish-Chandra homomorphism, with
an infinitesimal character for $G$. This construction only depends on
the $\ch G$-orbit of $\lambda$. We therefore refer to a semisimple element
$\lambda\in\ch \g$, or to its $\ch G$-orbit   
\begin{align}\label{eq:infchar}
\mathcal{O} = {}^{\vee}G \cdot \lambda,
\end{align}
as an \emph{infinitesimal character} for $G$.

We denote by ${}^{\vee}G(\lambda)$ the centralizer of $e(\lambda)=\exp(2\pi i \lambda)$ in ${}^{\vee}G$, and by ${}^{\vee}L(\lambda)$ the centralizer of $\lambda$ in ${}^{\vee}G$.  
In addition, we let ${}^{\vee}\mathfrak{n}(\lambda)$ be the sum of the positive integral eigenspaces of $\operatorname{ad}(\lambda)$, and ${}^{\vee}N(\lambda)$ the connected unipotent subgroup with Lie algebra ${}^{\vee}\mathfrak{n}(\lambda)$.  
These subgroups combine to define the parabolic subgroup
\begin{align}\label{eq:parabolic}
{}^{\vee}P(\lambda) = {}^{\vee}L(\lambda) {}^{\vee}N(\lambda),
\end{align}
of ${}^{\vee}G(\lambda)$.  
Consider the set
\[
\mathcal{I}(\lambda) = \left\{\, y \in {}^{\vee}G^{\Gamma} \,-\, {}^{\vee}G \;:\; 
 y^{2} = e(\lambda) \,\right\}.
%\exists g\in {}^{\vee}G,\,  y^{2} = ge(\lambda)g^{-1} \,\right\}.
\]  
By \cite[Proposition~6.13]{ABV}, this set contains only finitely many ${}^{\vee}G(\lambda)$–orbits,  
which we denote by
\[
\mathcal{I}_{1}(\lambda),\, \dots\, , \mathcal{I}_{r}(\lambda).
\]
For each $1 \leq i \leq r$, choose a representative $y_{i} \in \mathcal{I}_{i}(\mathcal{O})$  
satisfying $y_{i}^{2} = e(\lambda)$.  
Conjugation by $y_{i}$ then defines an involution of ${}^{\vee}G(\lambda)$,  
whose fixed point set we denote by 
\begin{align}\label{eq:K}
{}^{\vee}K_{i}\, =\, {}^{\vee}G(\lambda)^{y_i}.
\end{align}
For each $1 \leq i \leq r$, define the variety
\begin{align}\label{eq:varietiesisomor}
X_{i}\left(\mathcal{O}, {}^{\vee}G^{\Gamma}\right) \;=\; {}^{\vee}G \times_{K_{i}} {}^{\vee}G(\lambda)/P(\lambda).
\end{align} 
We then combine these varieties to form
\begin{align}\label{eq:varietiesisomor2}
X\left(\mathcal{O}, {}^{\vee}G^{\Gamma}\right)\;=\; \bigsqcup_{i=1}^{r} {}^{\vee}G \times_{{}^{\vee}K_i} X_i\left(\mathcal{O}, {}^{\vee}G^{\Gamma}\right).
\end{align}
The ${}^{\vee}G$–orbits on 
$X_{i}\bigl(\mathcal{O}, {}^{\vee}G^{\Gamma}\bigr)$  
are in one-to-one correspondence with the $K_{i}$–orbits on the partial flag variety  
${}^{\vee}G(\lambda)/P(\lambda)$.  
Moreover, by \cite[Proposition~7.14]{ABV}, this correspondence preserves both the closure relations  
and the nature of the singularities of the orbit closures.
Finally, the union of the varieties $X\left(\mathcal{O}, {}^{\vee}G^{\Gamma}\right)$, as $\mathcal{O}$ runs over the ${}^{\vee}G$–orbits of semisimple elements in ${}^{\vee}\mathfrak{g}$,  
yields the variety $X\left({}^{\vee}G^{\Gamma}\right)$ mentioned in the introduction and used in \cite{ABV} to reformulate the Langlands correspondence.
%satisfying 
%where 
%, with  %$\mathcal{O}' = {}^{\vee}G \cdot \lambda'$

In a little more detail, by \cite[Proposition~5.6]{ABV}, the set of %${}^{\vee}G$–conjugacy classes of 
L-homomorphisms $P\left({}^{\vee}G^{\Gamma}\right)$  
can be identified with the set %of ${}^{\vee}G$–conjugacy classes 
of pairs $(y', \lambda')$, where $\lambda' \in {}^{\vee}\mathfrak{g}$ is semisimple, $(y')^2=e(\lambda')$, and
\[
[\lambda', \operatorname{Ad}(y')\lambda'] = 0.
\]
We denote by $\varphi(y', \lambda')$ the L–homomorphism corresponding to the pair $(y', \lambda')$. Consider the set
\[
P\left(\mathcal{O}, {}^{\vee}G^{\Gamma}\right) = \left\{\, \varphi(y', \lambda') \in 
P\left({}^{\vee}G^{\Gamma}\right) : \lambda' \in \mathcal{O} \,\right\}
\]
of L–homomorphisms with infinitesimal character $\mathcal{O} = {}^{\vee}G \cdot \lambda$~\eqref{eq:infchar}, and write
\(
\Phi\left(\mathcal{O}, {}^{\vee}G^{\Gamma}\right)
\)
for the set of ${}^{\vee}G$–conjugacy classes in $P\left(\mathcal{O}, {}^{\vee}G^{\Gamma}\right)$. 
Then, as a representative of any
$\varphi \in \Phi\left(\mathcal{O}, {}^{\vee}G^{\Gamma}\right)$, we may choose
the L–homomorphism $\varphi(y', \lambda)$, where $y' \in \mathcal{I}(\lambda)$. 
Since $y' \in \mathcal{I}(\lambda)$,
there exists $1 \leq i \leq r$ and $g\in {}^{\vee}G(\lambda)$ such that $y'=gy_ig^{-1} \in K_i$. Then, by
\cite[Propositions~6.16 and 6.17]{ABV}, the identification
\begin{align}\label{eq:KorbitToLmorphism}
\text{${}^{\vee}G$–conjugacy class of } \varphi(y', \lambda') \;\longmapsto\; {}^{\vee}K_i \cdot (g^{-1} \, {}^{\vee}P(\lambda))
\end{align}
induces a bijection
\begin{align}\label{eq:KorbitToLmorphismBij}
\Phi\left(\mathcal{O}, {}^{\vee}G^{\Gamma}\right) \quad \longleftrightarrow\quad \bigcup_{i=1}^r\left\{{}^{\vee}K_i\text{–orbits in } {}^{\vee}G(\lambda) / {}^{\vee}P(\lambda)\right\},
\end{align}
which, by Equality \eqref{eq:varietiesisomor2}, can be seen as a bijection between the
${}^{\vee}G$–orbits of $X\left(\mathcal{O}, {}^{\vee}G^{\Gamma}\right)$ and the set of L-parameters $\Phi\left(\mathcal{O}, {}^{\vee}G^{\Gamma}\right)$.
%For each ${}^{\vee}G$–conjugacy class $\mathcal{O}$ of semisimple elements in ${}^{\vee}\mathfrak{g}$, and each $1 \leq i \leq r$, we define
%\[
%\Phi_i\left(\mathcal{O}, {}^{\vee}G^{\Gamma}\right) = \bigl\{\, \varphi(y, \lambda') \in \Phi\left({}^{\vee}G^{\Gamma}\right) : \lambda' \in \mathcal{O},\ y \in I_i(\mathcal{O}) \,\bigr\}.
%\]  
%It then follows that
%\[
%\Phi\left(\mathcal{O}, {}^{\vee}G^{\Gamma}\right) := \bigl\{\, \varphi(y, \lambda') \in 
%\Phi\left({}^{\vee}G^{\Gamma}\right) : \lambda' \in \mathcal{O},\ y \in I(\mathcal{O}) \,\bigr\} 
%= \bigcup_{i=1}^{r} \Phi_i\left(\mathcal{O}, {}^{\vee}G^{\Gamma}\right).
%\]  
%by 
%We now fix one of the fixed point sets ${}^{\vee}K_i$ (with $1 \leq i \leq r$), denote it for simplicity by ${}^{\vee}K$, and restrict our attention to the set of ${}^{\vee}K$–orbits in ${}^{\vee}G / {}^{\vee}P(\lambda)$
%which we denote by ${^\vee}K \backslash X$.

We now fix one of the fixed point sets ${}^{\vee}K_i$ (with $1 \leq i \leq r$), 
denote it simply by ${}^{\vee}K$, and restrict our attention to the set 
${}^{\vee}K \backslash X(\lambda)$ of ${}^{\vee}K$–orbits in the partial flag variety
$$X(\lambda)={}^{\vee}G(\lambda) / {}^{\vee}P(\lambda).$$
%which, for simplicity, we denote by ${}^{\vee}K$, and restrict our attention to the set of ${}^{\vee}K$–orbits in ${}^{\vee}G / {}^{\vee}P(\lambda)$.
% from now own. 
Through the bijection \eqref{eq:KorbitToLmorphism},  
the Local Langlands Correspondence assigns to each ${}^{\vee}K$–orbit  
$S \subset {}^{\vee}G(\lambda) / {}^{\vee}P(\lambda)$ an L–packet,  
\begin{align}\label{eq:LtoS}
S \longleftrightarrow\ \varphi_S\ \longleftrightarrow\
\Pi_{S}(G/\R):=\Pi_{\varphi_S}(G/\R).
\end{align}
This correspondence is refined in \cite{ABV} by enriching the ${}^{\vee}K$–orbit  
$S$ with a ${}^{\vee}K$–equivariant local system $\mathcal{V}$ of complex vector spaces on $S$.  
A complete geometric parameter is then a pair $\xi = (S, \mathcal{V})$,  
and the enhanced local Langlands correspondence, stated in \cite[Theorem~10.4]{ABV},  
associates to $\xi$ a unique (infinitesimal equivalence class of an) irreducible admissible representation $\pi(\xi)$ of a pure real form of $G$, with infinitesimal character $\lambda$.
%The monodromy representation of $\mathcal{V}$ is an irreducible representation of the Langlands component group for $\varphi$ \cite[Lemma 7.5]{ABV}.  
In particular, we obtain a bijection
\begin{equation}\label{eq:RLLC}
 \pi(\xi)\ \longleftrightarrow\ \xi=(S,\mathcal{V}),
\end{equation}
between the set of irreducible local systems supported on $S$ and the extended L–packet  
$ \Pi_{S}(G/\R)$.
% := \bigsqcup_{\delta}\, \Pi_S(G(\delta)),\]
%where $\delta$ varies over the inner class of pure real forms of $G$ fixed in the introduction.  
The set of complete geometric parameters will be denoted by
\begin{equation}\label{eq:complete-geoparam}
\Xi(X(\lambda),{}^{\vee}K).
\end{equation}
A striking feature of \eqref{eq:RLLC} is that the pair
$\xi = (S, \mathcal{V})$ determines an irreducible perverse sheaf
$P(\xi)$ on $X(\lambda)$. 
Let 
%\begin{align}\label{eq:j-map}
\(j: S \longrightarrow \overline{S}\)
%\end{align}
denote the inclusion of $S$ into its closure, and
%\begin{align}\label{eq:i-map}
\(i: \overline{S} \longrightarrow X(\lambda)\)
%\end{align}
the inclusion of the closure $\overline{S}$ into the flag variety $X(\lambda)$.  
%Set $d = d(S)$ to be the dimension of $S$.  
If we regard the local system $\mathcal{V}$ as a constructible sheaf on $S$,  
the complex $\mathcal{V}[-\dim S]$, consisting of the single sheaf $\mathcal{V}$ placed in degree $-d$,  
defines a  ${}^{\vee}K$––equivariant perverse sheaf on $S$.  
Applying the intermediate extension functor $j_{!\ast}$, followed by the direct image functor $i_{\ast}$,  
we obtain an irreducible perverse sheaf on $X(\lambda)$,
\begin{equation}\label{eq:irredPerv}
P(\xi) = i_{\ast} j_{!\ast} \mathcal{V}[-\dim S],
\end{equation}
called the perverse extension of $\mathcal{V}$.  
By then applying the de Rham functor, we obtain an irreducible $\mathcal{D}_{X(\lambda)}$–module
\begin{equation}\label{eq:irredDM}
D(\xi) = DR^{-1}(P(\xi)).
\end{equation}
%Now that we have reviewed how to attach an irreducible $D$–module to each complete geometric parameter,  The definition of a micro-packet 
Everything is now in place to turn to the definition of micro-packets.  
The construction hinges on two invariants associated with an irreducible $\mathcal{D}_{X(\lambda)}$–module $D = D(\xi)$.

The first invariant is the \emph{characteristic variety} $\mathrm{Ch}(D)$ \cite[Section~19]{ABV}, \cite[Sections~2.1--2.2]{Hotta}.  
It is a closed algebraic subvariety of the cotangent bundle $T^{\ast}X(\lambda)$, 
In fact, it is contained in the conormal bundle $T^{\ast}_{{}^{\vee}K} X(\lambda)$ to the ${}^{\vee}K$–action on $X(\lambda)$ 
\cite[(19.1), Proposition~19.12(b)]{ABV}, and its  ${}^{\vee}K$–\textbf{components}—that is, the smallest ${}^{\vee}K$–invariant unions of irreducible components—of 
$T^{\ast}_{{}^{\vee}K} X(\lambda)$ are the closures $\overline{T_S^{\ast} X(\lambda)}$ of the conormal bundles to 
${}^{\vee}K$–orbits $S$ in $X$ \cite[Lemma~19.2(b)]{ABV}.  
According to \cite[Proposition~19.12(c)]{ABV}, the characteristic variety $\mathrm{Ch}(D)$ is a union of such ${}^{\vee}K$–components.  
To every irreducible component $C$ of $\mathrm{Ch}(D)$, there is an associated local module whose length $m_C(D)$ \cite[Definition~19.7]{ABV} depends only on the ${}^{\vee}K$–component $\overline{T_S^{\ast} X(\lambda)}$ containing $C$; in other words, all irreducible components within a given ${}^{\vee}K$-component have the same length. 
This leads to the definition of the \emph{characteristic cycle} of $D$ as a formal sum
\begin{equation}
\label{cc}
CC(D) = \sum_{C} m_C(D) \, C 
\;=\; \sum_{S \in {^\vee}K \backslash X(\lambda)} \chi_S^{\mathrm{mic}}(D)\ \overline{T^*_{S} X(\lambda)},
\end{equation}
where $\chi_S^{\mathrm{mic}}(D) = m_C(D)$ for any irreducible component $C$ contained in the 
${}^{\vee}K$–component corresponding to $\overline{T^*_{S} X}$.  
The non-negative integer $\chi_S^{\mathrm{mic}}(D)$ is called the \emph{microlocal multiplicity along $S$}.  
%Finally, it is important to mention that, 

By \cite[Theorem~2.2.3]{Hotta}, taking characteristic cycles defines a $\mathbb{Z}$–linear map
\begin{equation}
\label{eq:CC-map}
CC: \mathscr{K}(X(\lambda), {^\vee}K) \longrightarrow \mathscr{L}(X(\lambda), {^\vee}K).
\end{equation}
The domain of the map is
the Grothendieck group $\mathscr{K}(X(\lambda),{^\vee}K)$ of the category of
${^\vee}K$-equivariant regular holonomic sheaves of
$\mathcal{D}$-modules on $X(\lambda)$.  The codomain of the map is
\begin{equation}
\label{LXK} 
\mathscr{L}(X(\lambda),{^\vee}K)=\left\{\sum_{S\in {^\vee}K\setminus
  X(\lambda)}m_S\ \overline{T^{\ast}_{S}X(\lambda)}:m_S\in\mathbb{Z}\right\}
\end{equation}
the free $\mathbb{Z}$–module generated by the closures of conormal bundles $\overline{T^*_{S} X}$, with $S \in {^\vee}K \backslash X(\lambda)$.  
Since, via the bijection \eqref{eq:irredDM}, we have an isomorphism between $\mathscr{K}(X, {^\vee}K)$ and the Grothendieck group of the category of
${^\vee}K$–equivariant perverse sheaves on $X$, we may identify these two Grothendieck groups, so that we can define
\begin{equation}\label{eq:D=P}
\chi_S^{\mathrm{mic}}(P(\xi)) =\chi_S^{\mathrm{mic}}(D(\xi)) 
\quad\text{ and }\quad
CC(P(\xi)) = CC(D(\xi)).
\end{equation}
We will be using these maps on both irreducible perverse sheaves and irreducible $D_{X(\lambda)}$-modules, depending on the context.

We are now in a position to define the micro-packet attached to a ${}^{\vee}K$–orbit $S \subset X(\lambda)$.  
The micro-packet of $S$ is defined as
\begin{equation}\label{mpacket}
\Pi^{\mathrm{mic}}_{S}(G/\R) \;=\; \{\, \pi(\xi')\in\Pi(G/\R)  : \chi_S^{\mathrm{mic}}(D(\xi')) \neq 0 \,\},
\end{equation}  
\cite[Definition~19.13]{ABV}. If the L–parameter $\varphi$ corresponds under \eqref{eq:LtoS} to the ${}^{\vee}K$–orbit $S_\varphi$,  
we denote the corresponding micro-packet by
\[
\Pi^{\mathrm{mic}}_\varphi(G/\R) := \Pi^{\mathrm{mic}}_{S_\varphi}(G/\R).
\]
This is a set of irreducible representations of pure strong involutions of $G$.
We are primarily interested in the packet for the quasisplit strong
involution. For this, given any pure strong involution $\delta \in G^{\Gamma}$, we define
\begin{equation}\label{mpacketdelta}
\Pi^{\mathrm{mic}}_{S}(G(\R,\delta)) \;=\; 
\bigl\{\, \pi(\xi') \in \Pi(G(\R,\delta)) : \chi_S^{\mathrm{mic}}(D(\xi')) \neq 0 \,\bigr\}.
\end{equation}
The representations in the packet $\Pi_S^{\mathrm{mic}}(G/\R)$ all have infinitesimal character 
$\lambda$. The proof of the Enhanced Shahidi Conjecture %for an arbitrary $\lambda$  
is carried out by reducing to the case where $\lambda$ is regular, i.e.
\begin{equation}\label{eq:regular}
{^\vee}\alpha(\lambda) \in \mathbb{Z} \ \Longrightarrow \
{^\vee}\alpha(\lambda) > 0,  \quad {^\vee}\alpha \in R\left({^\vee}B, {^\vee}T\right).
\end{equation}
% character.  Recall
%that the is said to be regular if 
The key tool that allows this reduction is the \emph{translation principle}.  
We conclude this section with a brief review of its main properties.  
In particular, we will focus on the interaction between the translation principle  
and the microlocal multiplicities, as this will allow us to reduce the description of  
micro-packets with singular infinitesimal character to the regular case.

The translation principle in representation theory was first introduced by Jantzen \cite{Jantzen}  
and independently by Vogan and Zuckerman \cite[Chapter 7]{greenbook}.  
A geometric version of this principle was later developed in \cite[Chapter~8]{ABV}.  
We will assume that the reader is already familiar with the representation-theoretic formulation,  
and concentrate here on its geometric incarnation.

The translation principle begins with the existence of a regular
element $\lambda' \in  {}^{\vee}\mathfrak{t}$, with 
\({^\vee}\mathrm{G}\)-orbit \(\mathcal{O}' \subset
  {^\vee}\mathfrak{g}\),  and a
  \emph{translation datum} \(\mathcal{T}\) from \(\mathcal{O}\) to
  \(\mathcal{O}'\) (\cite[Definition 8.6, Lemma 8.7]{ABV}). Two
  requirements of a translation datum are that
\begin{equation}\label{eq:lambdaprime}
\lambda - \lambda' \in X_*( {^\vee}T),
\end{equation}
and that for every root $\alpha \in
R({}^{\vee}\mathrm{G}, {}^{\vee}T)$, we have
%\begin{align}\label{eq:translationdatum1}
%\alpha(\lambda) \in \{1,2,3, \ldots \}
%\quad \Longrightarrow \quad 
%\alpha(\lambda') \in \{1,2,3, \ldots\}.
%\end{align}
%The translation principle begins with the existence of a regular \({^\vee}\mathrm{G}\)-orbit \(\mathcal{O}' \subset {^\vee}\mathfrak{g}\) of semisimple elements and a \emph{translation datum} \(\mathcal{T}\) from \(\mathcal{O}\) to \(\mathcal{O}'\) (\cite[Definition 8.6, Lemma 8.7]{ABV}). A key feature of the translation datum is that \(\mathcal{O}'\) is the \({^\vee}\mathrm{G}\)-orbit of an element  $\lambda' \in {}^{\vee}\mathfrak{t}$ satisfying  
%\begin{equation}\label{eq:lambdaprime}
%\lambda - \lambda' \in X_*(^\vee T),
%\end{equation}
%together with the condition that for every root $\alpha \in R({}^{\vee}\mathrm{G}, {}^{\vee}T)$,  
%we have
\begin{align}\label{eq:translationdatum1}
\alpha(\lambda) \in \mathbb{N} \,-\, \{0\} 
\quad \Longrightarrow \quad 
\alpha(\lambda') \in \mathbb{N} \,-\, \{0\}.
\end{align}
Define \({}^{\vee}P(\lambda)\) and \({}^{\vee}P(\lambda')\) as in \eqref{eq:parabolic}. Then due to Equations \eqref{eq:lambdaprime} and \eqref{eq:translationdatum1}, we have the containment
\[
{}^{\vee}P(\lambda') \subset {}^{\vee}P(\lambda).
\]
Recall from the begining of this section that ${^\vee}\mathrm{G}(\lambda)$ is the centralizer of $e(\lambda)$ in ${}^{\vee}\mathrm{G}$, and that ${^\vee}K$ is the centralizer of an element $y \in {^\vee}\mathrm{G}(\lambda')$ with $y^2 = e(\lambda)$. It follows from Equations \eqref{eq:lambdaprime} and \eqref{eq:translationdatum1} that $e(\lambda) = e(\lambda')$, which in turn implies that the corresponding centralizers coincide, that is
\[
{^\vee}{G}(\lambda') = {^\vee}{G}(\lambda) \quad \text{and} \quad
{^\vee}K(\lambda') ={^\vee}{G}(\lambda')^y= {^\vee}K(\lambda).
\]
%Even though the respective centralizers are equal, we will continue to write 
%${^\vee}{G}(\lambda)$ and ${^\vee}{G}(\lambda')$, and likewise 
%${^\vee}K(\lambda)$ and ${^\vee}K(\lambda')$, so that the reader can easily distinguish 
%whenever our framework pertains to the regular case or the singular case.
Despite these equalities, we will persist in  writing 
${^\vee}\mathrm{G}(\lambda)$ and
${^\vee}\mathrm{G}(\lambda')$, and likewise  
${^\vee}K(\lambda)$ and ${^\vee}K(\lambda')$, so that the reader can
easily distinguish when the theory pertains to the singular or to the regular
setting.  

We continue to denote by \(X(\lambda)\) the generalized flag variety \({}^{\vee}{G}(\lambda) / {}^{\vee}P(\lambda)\) of \({}^{\vee}{G}(\lambda)\), and by \(X(\lambda')\) the flag variety \({}^{\vee}G(\lambda') / {}^{\vee}P(\lambda')\). Then, by \cite[Proposition 8.8]{ABV}, the surjection
%Moreover, by \cite[Proposition~8.8]{ABV}, the projection
\begin{align}\label{eq:ftee}
f_\mathcal{T}: X(\lambda') \longrightarrow X(\lambda)
\end{align}
defines smooth and proper morphism, with connected fibres of fixed dimension, which we denote by \(d\).
 According to \cite[Proposition 7.15]{ABV}, the morphism
$f_\mathcal{T}$
% in \eqref{eq:ftee} 
induces an inclusion
\begin{equation}\label{eq:fstar}
f^{*}_{\mathcal{T}}: \Xi(X(\lambda), {}^{\vee}K(\lambda)) \hookrightarrow \Xi(X(\lambda'), {}^{\vee}K(\lambda))
\end{equation}
of complete geometric parameters. We refer the reader to \cite[page 95]{ABV} for a detailed description of the image
\[
f_\mathcal{T}^{\ast}(\xi) = (f_\mathcal{T}^{\ast}S, f_\mathcal{T}^{\ast}\tau)
\]
of a complete geometric parameter \(\xi = (S, \tau) \in \Xi(X(\lambda), {}^{\vee}K(\lambda))\). We only mention here that, since \({}^{\vee}K(\lambda')\) has only finitely many orbits on \(X(\lambda')\), and \(f^{\ast}_\mathcal{T}\) has connected fibres, there is a unique open orbit of \({}^{\vee}K(\lambda')\) in \(f_{\mathcal{T}}^{-1}(S)\). The orbit \(f_\mathcal{T}^{\ast}S\) is defined to be this unique open orbit.
 %This morphism has connected fibres of fixed dimension, which we denote by \(d\). This fact will be used to define the translation functor and when comparing orbit dimensions.

That being said, we can now introduce the geometric version of the translation functor in representation theory.  
As defined in \cite[Proposition~8.8(b)]{ABV}, this functor maps the category 
 $\mathcal{P}\left(
X(\lambda),{}^{\vee}K(\lambda)\right)$
of \({}^{\vee}K(\lambda)\)–equivariant perverse sheaves on \(X(\lambda)\) to the category 
 $\mathcal{P}\left(
X(\lambda'),{}^{\vee}K(\lambda')\right)$
of \({}^{\vee}K(\lambda')\)–equivariant perverse sheaves on \(X(\lambda')\).  
It is given by the inverse image along the projection in \eqref{eq:ftee}, shifted by the relative dimension \(d\)
\begin{align}\label{eq:geometricTF}
f^{*}_{\mathcal{T}}[d]: \mathcal{P}\left(
X(\lambda),{}^{\vee}K(\lambda)\right) \longrightarrow \mathcal{P}\left(
X(\lambda'),{}^{\vee}K(\lambda')\right).
\end{align}
Following \cite[Proposition 7.15(b)]{ABV}, this is a fully faithful exact functor, sending irreducible perverse sheaves to irreducible perverse sheaves. Furthermore, it satisfies
\[
f^{\ast}_{\mathcal{T}}[d]\left(P(\xi)\right) = P(f_\mathcal{T}^{\ast}(\xi)).
\]
Another important property of the translation functor is that it preserves the microlocal multiplicities along the \({}^{\vee}K(\lambda)\)-orbits of \(X(\lambda)\). 
Indeed, \cite[Proposition 20.1(e)]{ABV} shows that for all $\xi \in \Xi(X(\lambda), {}^{\vee}K(\lambda))$ and for all ${}^{\vee}K(\lambda)$–orbits $S$ in $X(\lambda)$, we have
\begin{align}
\begin{aligned}\label{eq:translationofchiV1}
\chi_{S}^{\mathrm{mic}}(P(\xi)) = \chi_{f^{\ast}_{\mathcal{T}}S}^{\mathrm{mic}}(f^{\ast}_{\mathcal{T}}P(\xi)) = \chi_{f^{\ast}_{\mathcal{T}}S}^{\mathrm{mic}}(P(f^{\ast}_{\mathcal{T}}\xi)).
\end{aligned}
\end{align}
As an immediate consequence, the characteristic cycle of \(P(\xi)\) can be expressed as
\begin{align}\label{eq:translationofchiV2}
CC(P(\xi)) = \sum_{S} \chi_{f^{\ast}_{\mathcal{T}}S}^{\mathrm{mic}}(P(f^{\ast}_{\mathcal{T}}\xi)) \cdot \overline{T^{\ast}_{S}X(\lambda)}.
\end{align}
In addition, by the definition of a micro-packet \eqref{mpacket}, we have
\begin{eqnarray}\label{eq:singularmicpacketrelation}
\pi(\xi)\in\Pi_{S}^{\mathrm{mic}}
\quad \Longleftrightarrow\quad 
\pi(f^{*}_{\mathcal{T}}\xi)\in\Pi_{f^{*}_{\mathcal{T}}S}^{\mathrm{mic}}.
\end{eqnarray}
To complete our description, we need to relate the geometric translation functor \eqref{eq:geometricTF} with its representation-theoretic counterpart.  
The (Jantzen–Zuckerman) translation functor in representation theory \cite[(17.8j)]{AvLTV} is an exact functor
\begin{equation}
  \label{transfunct}
  \Psi_{\mathcal{T}}:\,  \Pi\bigl(\mathcal{O}', \mathrm{G}/\mathbb{R} \bigr)
  \longrightarrow \Pi\bigl(\mathcal{O}, \mathrm{G}/\mathbb{R}\bigr),
\end{equation}
from the category of representations of pure real forms of \(\mathrm{G}\) with infinitesimal character \(\mathcal{O}'\), to the category of representations of pure real forms of \(\mathrm{G}\) with infinitesimal character \(\mathcal{O}\).
According to \cite[Corollary 17.9.8]{AvLTV}, the functor \(\Psi_{\mathcal{T}}\) is surjective. Moreover, by \cite[Proposition 16.4(b)]{ABV}, for any complete geometric parameter \(\xi \in \Xi(X(\lambda'), {}^{\vee}K(\lambda'))\), the image \(\Psi_{\mathcal{T}}(\pi(\xi))\) is either irreducible or zero—the former occurring if and only if \(\xi = f^{\ast}_\mathcal{T}(\gamma)\) for some \(\gamma \in \Xi(X(\lambda), {}^{\vee}K(\lambda'))\).
%Alternatively, by \cite[Proposition 11.16]{ABV}, \(\Psi_{\mathcal{T}}(\pi(\xi))\) is nonzero if and only if no root in the tau-invariant of \(\pi(\xi)\) is orthogonal to \(\lambda\).

%Furthermore, 
By \cite[Proposition 16.6]{ABV}, we have the identity
\begin{align}\label{eq:SingularPimic}
\Psi_{\mathcal{T}}(\pi(f^{\ast}_\mathcal{T}(\xi))) = \pi(\xi).
\end{align}
This equality, together with \eqref{eq:singularmicpacketrelation}, imply that %the image under the translation functor of the micro-packet of \(f^{\ast}_\mathcal{T}S\) is equal to the micro-packet of \(S\)
 the image of the micro-packet of
\(f^{\ast}_\mathcal{T}S\)  under the translation functor is equal to
the micro-packet of \(S\)  
\begin{align}\label{eq:Singularetamic}
\Pi_{S}^{\mathrm{mic}}(G/\mathbb{R})\ =\ \left\{\Psi_\mathcal{T}(\pi):\ \pi\in 
\Pi_{f^{\ast}_\mathcal{T}S}^{\mathrm{mic}}(G/\R),\ \Psi_\mathcal{T}(\pi)\neq 0 \right\}.
\end{align}

\section{Weyl group actions and the  characteristic cycle map}

Let $\lambda$ be a semisimple element in ${}^{\vee}\mathfrak{g}$.  
In this section, we assume that $\lambda$ satisfies \eqref{eq:regular}.  
Let ${}^{\vee}G(\lambda)$ and ${}^{\vee}P(\lambda)$ be as in the paragraph preceding \eqref{eq:parabolic},
%\eqref{eq:parabolic}
%at the beginning of Section~\ref{section:micro-packets}, 
and consider the flag variety 
\(
X(\lambda) = {}^{\vee}G(\lambda)/{}^{\vee}P(\lambda).
\)  
Let ${}^{\vee}K$ be one of the fixed point sets introduced in \eqref{eq:K}.  
Our focus will be on the set ${}^{\vee}K \backslash X(\lambda)$ of ${}^{\vee}K$–orbits in $X(\lambda)$.

To prove that the micro-packet associated with the open orbit in 
$X(\lambda)$ is the only one containing generic representations, 
we will compute the characteristic cycles of the $D_{X(\lambda)}$-modules
corresponding, under \eqref{eq:RLLC} and \eqref{eq:irredDM}, to this class of representations.  Although  computing characteristic cycles is generally a highly nontrivial task, 
in this specific case the situation simplifies: it suffices to analyze the behavior of their characteristic cycles 
under the action of the Weyl group
$$
{}^{\vee}W = W({^\vee}G, {^\vee}T).
$$
Let us briefly review how this action is realized. In the representation-theoretic setting, there is a natural ${}^{\vee}W$--action on $\mathscr{K}(X(\lambda), {^\vee}K)$, commonly referred to as the \emph{coherent continuation representation} \cite[Definition~7.2.28]{greenbook}.  
This ${}^{\vee}W$-action can be explicitly computed using the \textsf{Atlas of Lie Groups and Representations} software \cite{atlas}.  

On the geometric side, there is a ${}^{\vee}W$-action on $\mathscr{L}(X(\lambda), {^\vee}K)$, as 
described in \cite{Hotta85} and \cite{Rossmann95}. By the work of Tanisaki \cite{Tanisaki}, this action is compatible with the coherent continuation representation on $\mathscr{K}(X, {^\vee}K)$.  
More precisely, by \cite[Theorem~1]{Tanisaki} %the ${}^{\vee}W$–actions on the domain and codomain of 
 the characteristic cycle map %make this map
 is ${}^{\vee}W$-equivariant, \emph{i.e.},
\begin{equation}\label{eq:W-equivariant}
CC(w \cdot D) = w \cdot CC(D), \quad D \in \mathscr{K}(X, {^\vee}K), \ w \in {}^{\vee}W.
\end{equation}
In the next section, we will exploit this ${}^{\vee}W$-equivariance  
to compute the characteristic cycle of the irreducible $D_{X(\lambda)}$-modules  
corresponding to irreducible generic representations. For the time being, let us say a few words on how ${}^{\vee}W$ acts on the domain and codomain of $CC$.

We start by considering its action on $\mathscr{K}(X(\lambda), {^\vee}K)$. To this end, we look at the Grothendieck group 
$\mathscr{K}({}^{\vee}\mathfrak{g},{^\vee}K)$
of the category of $({^\vee}\mathfrak{g}, {^\vee}K)$-modules with infinitesimal character equal to the half-sum of the roots in $R({^\vee}B,{^\vee}T)$. By the Beilinson–Bernstein correspondence (\cite[Theorem 8.3]{ABV}, \cite[Sections 11.5–11.6]{Hotta}), this Grothendieck group can be identified with $\mathscr{K}(X(\lambda, {^\vee}K)$.
 %.  Its image is the set
%of irreducible $({^\vee}\mathfrak{g}_{2}, {^\vee}K)$-modules whose
%infinitesimal character is $\rho$, the half-sum of the roots in $R({^\vee}B,
%{^\vee}T)$.

The coherent continuation representation, first introduced by Vogan-Zuckerman, is originally defined as an action of ${}^{\vee}W$ on $\mathscr{K}({}^{\vee}\mathfrak{g},{^\vee}K)$.  
Via the %localization and de Rham functors, 
Beilinson-Bernstein correspondence this action transfers to an action of ${}^{\vee}W$ on $\mathscr{K}(X(\lambda), {^\vee}K)$.  
For a precise definition and a detailed description of its properties, we refer the reader to \cite[Definition~7.2.28 and Lemma~7.2.29]{greenbook}.  
For a description of the action of simple reflections on the basis given by irreducible representations, see \cite[Lemma~14.7]{ICIV}.  
From these references, the notion we need is the \emph{$\tau$-invariant} (following \cite[Definition~7.3.8]{greenbook}).  
For an irreducible $D_{X(\lambda)}$-module $D \in \mathscr{K}(X(\lambda), {^\vee}K)$, we say that a simple root ${^\vee}\alpha\in R^+({}^{\vee}G,{}^{\vee}T)$ is in the (Borho-Jantzen-Duflo) $\tau$-invariant $\tau(D)$ of $D$ if
\begin{equation}\label{eq:verticalP}
s_{{}^{\vee}\alpha}^{} \cdot D = -D,
\end{equation}
where $s_{{}^{\vee}\alpha}^{} \in {}^{\vee}W$ is the simple reflection corresponding to ${}^{\vee}\alpha$.  
The construction defining the ${}^{\vee}W$-action on $\mathscr{K}({}^{\vee}\mathfrak{g}, {^\vee}K)$ also induces an action of the Weyl group
$W = W(G,T)$, on the Grothendieck group of the category of representations of pure real forms of \(G\) with infinitesimal character $\lambda$.  
In particular, this provides a definition of the $\tau$-invariant for irreducible representations of pure real forms of \(G\).  
Let $\xi\in\Xi(X(\lambda), {}^{\vee}K)$ be a complete geometric parameter. 
What matters for us concerning the $\tau$-invariant 
 %purposes
 is the relationship between %the 
%of 
$\tau(\pi(\xi))$ and %that of 
$\tau(D(\xi))$. 
It follows from \cite[Corollary~14.9(b)]{ICIV} that
$$
s_\alpha \cdot \pi(\xi)\neq-\pi(\xi)
\quad\Longleftrightarrow\quad s_{{}^{\vee}\alpha} \cdot D(\xi)=-D(\xi).
$$ 
Consequently, if we write $\Delta\left({}^{\vee}B,{}^{\vee}T\right)$ for the set of simple roots in 
$R^+({}^{\vee}G,{}^{\vee}T)$, we obtain
\begin{equation}\label{eq:tau-comparison}
\tau(D(\xi)) \;=\; \Delta\left({}^{\vee}B,{}^{\vee}T\right) \;-\; \left\{{}^{\vee}\alpha : \alpha \in \tau(\pi(\xi))\right\}.
\end{equation}
We now turn to a brief review of the ${}^{\vee}W$-action on the codomain of $CC$.
%, namely the $\mathbb{Z}$–module $\mathscr{L}(X,{^\vee}K)$.  
There is a natural isomorphism between $\mathscr{L}(X(\lambda),{^\vee}K)$ and the top Borel–Moore homology
\[
H_{\mathrm{top}}^{\infty}(T_{{^\vee}K}^{\ast}X(\lambda), \mathbb{Z}).
\]  
Since the ${^\vee}K$-components of $T_{{^\vee}K}^{\ast}X(\lambda)$ are the closures of the conormal bundles of ${^\vee}K$-orbits $S$ in $X(\lambda)$, it follows from \cite[Lemma~19.1.1]{fulton} that the set
\(
\left\{[\overline{T_S^{\ast}X(\lambda)}] : S \in {^\vee}K \backslash X(\lambda) \right\}
\)
of fundamental classes of conormal bundle closures forms a basis for the top Borel-Moore homology
\[
H_{\mathrm{top}}^{\infty}(T_{{^\vee}K}^{\ast}X(\lambda), \mathbb{Z}) = \bigoplus_{S \in {^\vee}K \backslash X} \mathbb{Z} \cdot [\overline{T_S^{\ast}X(\lambda)}].
\]  
We identify each $\overline{T_S^{\ast}X}$ with its fundamental class $[\overline{T_S^{\ast}X}]$ and write
\[
\mathscr{L}(X(\lambda),{^\vee}K) = H_{\mathrm{top}}^{\infty}(T_{{^\vee}K}^{\ast}X(\lambda), \mathbb{Z}).
\]
In \cite{Hotta85}, \cite{HottaR} and \cite{Rossmann95}, the
topological construction of the Springer representation 
% due to Kazhdan-Lusztig 
is expanded in order to define an action of the Weyl group
on the Borel-Moore homology space.  To make this action more concrete, we present in some detail how it is realized in the case of simple reflections.  
Let ${}^{\vee}\alpha\in \Delta\left({}^{\vee}B,{}^{\vee}T\right)$. Consider the natural projection
\begin{equation}
\label{pimap}
\pi_{{}^{\vee}\alpha}^{}: {}^{\vee}G(\lambda) / {}^{\vee}P(\lambda) \ \longrightarrow\  {}^{\vee}G(\lambda) / \left( {}^{\vee}P(\lambda) \sqcup  {}^{\vee}P(\lambda) s_{^{\vee}\alpha}^{}  {}^{\vee}P(\lambda)\right).
\end{equation}
Following \cite[Section~2]{Hotta85} and \cite[{\S3, Lemma~2}]{Tanisaki}, we say that a ${^\vee}K$-orbit $S$ is \emph{$s_{^{\vee}\alpha}^{}$-vertical} if, for every $x \in S$, the intersection
\begin{align}\label{eq:vertical0}
\pi_{{}^{\vee}\alpha}^{-1}(\pi_{{}^{\vee}\alpha}^{}(x)) \cap S
\end{align}
is open and dense in $\pi_{{}^{\vee}\alpha}^{-1}(\pi_{{}^{\vee}\alpha}^{}(x))$.  
If this condition is not satisfied, we say that $S$ is \emph{$s_{{}^{\vee}\alpha}^{}$-horizontal}.  
The following theorem provides a partial description of the ${}^{\vee}W$-action in this setting.
\begin{comment}
Let $\alpha$ be a simple root relative to $B$, $s_{{}^{\vee}\alpha \in W$ be its simple
reflection, and consider the natural projection 
\begin{equation}
\label{pimap}
\pi_{\alpha}: G/ B\,
\longrightarrow\, G/(B\sqcup B s_\alpha B).
\end{equation}
Following \cite[Section 2]{Hotta85} and \cite[{\S3, Lemma
    2}]{Tanisaki}, we say that a  
$K$-orbit $S$ is \emph{$s_\alpha$-vertical} if  for every $ x \in S$
the intersection 
\begin{align}\label{eq:vertical0}
\pi_\alpha^{-1}(\pi_\alpha(x)) \cap S,
\end{align}
is open and dense in $\pi_\alpha^{-1}(\pi_\alpha(x))$. If this
condition is not satisfied, we say 
that $S$ is \emph{$s_\alpha$-horizontal}. 
The following theorem gives a partial description of the action. 
\end{comment}
\begin{theo}[{\cite[Sections 3 and 4]{Hotta85}}, {\cite[Proposition
        8]{Tanisaki}}] 
\label{theo:Waction} Let $S$ be a ${}^{\vee}K$-orbit in $X(\lambda)$. Fix a simple root
${}^{\vee}\alpha^{} \in   \Delta\left({}^{\vee}B,{}^{\vee}T\right)$. 
\begin{enumerate}
\item 
If $S$ is $s_{{}^{\vee}\alpha}$-{\it{vertical}}, then
\begin{align}\label{eq:vertical}
s_{{}^{\vee}\alpha}\cdot \overline{T_S^{\ast}X(\lambda)}\, =\, -{\overline{T_S^{\ast}X(\lambda)}}. 
\end{align}
\item\label{theo:Waction2}  If $S$ is $s_{{}^{\vee}\alpha}$-{\it{horizontal}}, then
for any $s_{{}^{\vee}\alpha}$-vertical orbit
$S'\subset\overline{\pi_{{}^{\vee}\alpha}^{-1}(\pi_{{}^{\vee}\alpha}^{}(S))}$,  
%with
%$$
%\mu\left(\overline{T^{\ast}_{S'}X}\right)\, \subset\, 
%\mu\left(\overline{T^{\ast}_{S}X}\right).
%$$
there is a non-negative integer $n_{S, S'}$, such that
\begin{align}\label{eq:horizontal}
s_{{}^{\vee}\alpha} \cdot \overline{T^{*}_{S}(\lambda)}\, =\, \overline{T^{*}_{S}X(\lambda)}\, +\, 
\sum_{S'}n_{S,S'} \, \overline{T^{*}_{S'}X(\lambda)}.
\end{align}
\end{enumerate}
\end{theo}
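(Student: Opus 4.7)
The plan is to unfold the Hotta--Rossmann realisation of the simple reflection $s_{{}^{\vee}\alpha}$ as a convolution correspondence built from the $\mathbb{P}^{1}$-fibration $\pi_{{}^{\vee}\alpha}$, and to evaluate this correspondence on the conormal class $\overline{T^{*}_{S}X(\lambda)}$ according to whether $S$ is vertical or horizontal. As a preliminary, I would fix the bundle geometry: the subgroup ${}^{\vee}P(\lambda)\sqcup{}^{\vee}P(\lambda)s_{{}^{\vee}\alpha}{}^{\vee}P(\lambda)$ is a parabolic of semisimple rank one containing ${}^{\vee}P(\lambda)$, so $\pi_{{}^{\vee}\alpha}:X(\lambda)\to Y_{{}^{\vee}\alpha}$ is a Zariski-locally trivial $\mathbb{P}^{1}$-bundle. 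The cotangent incidence variety over the fibre product $X(\lambda)\times_{Y_{{}^{\vee}\alpha}}X(\lambda)$, suitably normalised and corrected by the identity, is what realises $s_{{}^{\vee}\alpha}$ on $H^{\infty}_{\mathrm{top}}(T^{*}_{{}^{\vee}K}X(\lambda),\mathbb{Z})$ in \cite{Hotta85,Rossmann95}.

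For the vertical case, by hypothesis a Zariski-dense open subset of $S$ is saturated by $\pi_{{}^{\vee}\alpha}$-fibres, so that $\overline{T^{*}_{S}X(\lambda)}$ agrees, outside loci of codimension at least two, with the pull-back $\pi_{{}^{\vee}\alpha}^{*}\overline{T^{*}_{\pi_{{}^{\vee}\alpha}(S)}Y_{{}^{\vee}\alpha}}$. The convolution correspondence annihilates such pull-back classes, because a top-degree fibrewise integration on $\mathbb{P}^{1}$ vanishes. Incorporating the identity correction then gives \eqref{eq:vertical}.

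For the horizontal case, the restriction $\pi_{{}^{\vee}\alpha}|_{S}$ is generically finite onto its image, and the preimage $\overline{\pi_{{}^{\vee}\alpha}^{-1}(\pi_{{}^{\vee}\alpha}(S))}$ decomposes as $\overline{S}$ together with the closures of other ${}^{\vee}K$-orbits. Applying the correspondence to $\overline{T^{*}_{S}X(\lambda)}$ and decomposing the output into ${}^{\vee}K$-components of $T^{*}_{{}^{\vee}K}X(\lambda)$ produces $\overline{T^{*}_{S}X(\lambda)}$ itself (from the diagonal part of the correspondence) together with contributions supported on the closures of conormal bundles of those $s_{{}^{\vee}\alpha}$-vertical orbits $S'\subset\overline{\pi_{{}^{\vee}\alpha}^{-1}(\pi_{{}^{\vee}\alpha}(S))}$, the coefficients $n_{S,S'}$ being intersection multiplicities along the $\mathbb{P}^{1}$-fibres. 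Horizontal orbits distinct from $S$ in the preimage contribute classes that lie in the kernel of the fibrewise integration and hence drop out, leaving \eqref{eq:horizontal}; the non-negativity of $n_{S,S'}$ is immediate from the effectivity of the cycles involved.

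The main obstacle is the intersection-theoretic bookkeeping in the horizontal case, in particular the verification that only $s_{{}^{\vee}\alpha}$-vertical orbits contribute and the actual determination of the multiplicities $n_{S,S'}$. This is the heart of the analysis carried out in \cite[Sections~3--4]{Hotta85} and \cite[Proposition~8]{Tanisaki}, and it rests on the local triviality of $\pi_{{}^{\vee}\alpha}$, the equality $\dim\overline{T^{*}_{S'}X(\lambda)} = \dim X(\lambda)$ for every ${}^{\vee}K$-orbit $S'$, and the compatibility of the correspondence with the ${}^{\vee}K$-action. Fortunately, for the applications in the next section only the non-negativity of $n_{S,S'}$ and the support condition $S'\subset\overline{\pi_{{}^{\vee}\alpha}^{-1}(\pi_{{}^{\vee}\alpha}(S))}$ are required, so the precise numerical values need not be computed.
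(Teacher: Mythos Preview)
The paper does not give its own proof of this theorem: it is stated with attribution to \cite[Sections~3--4]{Hotta85} and \cite[Proposition~8]{Tanisaki} and is used as a black box thereafter. So there is no ``paper's proof'' to compare against; your sketch is an outline of the argument in those cited sources.

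As such an outline, your proposal is broadly faithful to Hotta and Tanisaki: the $s_{{}^{\vee}\alpha}$-action is indeed realised through the $\mathbb{P}^{1}$-fibration $\pi_{{}^{\vee}\alpha}$, and the vertical/horizontal dichotomy governs the shape of the answer. Two points of imprecision are worth flagging. First, in the vertical case your phrase ``top-degree fibrewise integration on $\mathbb{P}^{1}$ vanishes'' is not quite the mechanism; rather, for a vertical orbit the conormal class is pulled back from $Y_{{}^{\vee}\alpha}$, and the convolution (before the identity correction) acts as the identity on such classes, so after subtracting the identity one obtains multiplication by $-1$. Second, in the horizontal case your explanation for why other horizontal orbits do not contribute (``lie in the kernel of the fibrewise integration'') is not the actual argument in \cite{Tanisaki}; the exclusion of horizontal $S'\neq S$ comes from a direct dimension and support analysis of the correspondence cycle, not from a vanishing integration. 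These are expository rather than structural gaps, and you correctly identify that only the non-negativity of $n_{S,S'}$ and the support constraint are used downstream.
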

We conclude this section with two results that will play a key role in the proof of Theorem~\ref{theo:ccforlarge} below.
The first is a direct consequence of the definition of $s_{{}^{\vee}\alpha}^{}$-horizontal and 
$s_{{}^{\vee}\alpha}$-vertical orbits.
The second, which exploits the ${}^{\vee}W$-equivariance of the characteristic cycle map, imposes restrictions on the conormal bundles that can appear in a characteristic cycle.
\begin{prop}\label{prop:horizontal}
Suppose that $S$ is a ${}^{\vee}K$-orbit in $X(\lambda)$ other than the open orbit. 
Then there exist a simple root ${}^{\vee}\alpha\in \Delta\left({}^{\vee}B,{}^{\vee}T\right)$ such that 
$S$ is $s_{{}^{\vee}\alpha}^{}$-horizontal.
%$$
%s_\alpha\cdot [\overline{T_S^{\ast}X}]=-[\overline{T_S^{\ast}X}].
%$$ 
\end{prop}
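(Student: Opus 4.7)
The plan is to proceed by contrapositive: I will show that if $S$ is $s_{{}^{\vee}\alpha}^{}$-vertical for \emph{every} simple root ${}^{\vee}\alpha \in \Delta({}^{\vee}B, {}^{\vee}T)$, then $S$ must coincide with the unique open ${}^{\vee}K$-orbit in $X(\lambda)$. The argument is essentially a closure/stability argument combined with the fact that the minimal parabolics attached to simple roots generate ${}^{\vee}G(\lambda)$.

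First, for a fixed simple root ${}^{\vee}\alpha$, I introduce the minimal parabolic
${}^{\vee}P_{{}^{\vee}\alpha} := {}^{\vee}P(\lambda) \sqcup {}^{\vee}P(\lambda)\, s_{{}^{\vee}\alpha}^{}\, {}^{\vee}P(\lambda)$
of ${}^{\vee}G(\lambda)$ containing ${}^{\vee}P(\lambda)$. The fibres of the projection $\pi_{{}^{\vee}\alpha}^{}$ from \eqref{pimap} are precisely the ${}^{\vee}P_{{}^{\vee}\alpha}$-orbits in $X(\lambda)$, each isomorphic to $\mathbb{P}^1$. Assuming that $S$ is $s_{{}^{\vee}\alpha}^{}$-vertical, the intersection \eqref{eq:vertical0} is open and dense in $\pi_{{}^{\vee}\alpha}^{-1}(\pi_{{}^{\vee}\alpha}^{}(x))$ for every $x\in S$. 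Since fibres are closed in $X(\lambda)$, taking closures yields $\overline{S} \supseteq \pi_{{}^{\vee}\alpha}^{-1}(\pi_{{}^{\vee}\alpha}^{}(x))$ for every $x \in S$, so $\overline{S}$ is stable under the action of ${}^{\vee}P_{{}^{\vee}\alpha}$.

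Next, suppose the verticality holds for \emph{every} simple root. Since the family of minimal parabolics $\{{}^{\vee}P_{{}^{\vee}\alpha}\}_{{}^{\vee}\alpha \in \Delta({}^{\vee}B,{}^{\vee}T)}$ generates ${}^{\vee}G(\lambda)$, the closure $\overline{S}$ is ${}^{\vee}G(\lambda)$-stable; as ${}^{\vee}G(\lambda)$ acts transitively on $X(\lambda)$, we conclude $\overline{S} = X(\lambda)$. Because there are only finitely many ${}^{\vee}K$-orbits on $X(\lambda)$, the orbit $S$ is necessarily open in its closure, and hence open in $X(\lambda)$. This contradicts the hypothesis that $S$ is not the open orbit, completing the contrapositive.

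There is no substantial obstacle: the argument reduces to two standard facts, namely that a closed $K$-stable subset which is open and dense in each fibre of $\pi_{{}^{\vee}\alpha}^{}$ it meets must contain those entire fibres, and that the minimal parabolics associated with simple roots generate ${}^{\vee}G(\lambda)$ (this is a consequence of the Bruhat decomposition together with the fact that simple reflections generate the Weyl group of ${}^{\vee}G(\lambda)$). The only point requiring some care is keeping track of the fact that $S$ being open in $\overline{S}$ comes for free from finiteness of the orbit stratification, a property recalled just before \eqref{eq:KorbitToLmorphismBij}.
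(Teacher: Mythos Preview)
Your proof is correct and follows the same contrapositive strategy as the paper: showing that if $S$ is $s_{{}^{\vee}\alpha}$-vertical for every simple root then $S$ is open. The paper's own proof is terser---it simply asserts that the open orbit is the only one that is open in $\pi_{{}^{\vee}\alpha}^{-1}(\pi_{{}^{\vee}\alpha}(S))$ for all ${}^{\vee}\alpha$---whereas you supply the underlying reason via $P_{{}^{\vee}\alpha}$-stability of $\overline{S}$ and generation of ${}^{\vee}G(\lambda)$ by the minimal parabolics.
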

\begin{proof}
By \eqref{eq:vertical0}, for every simple root $\alpha \in \Delta\left({}^{\vee}B,{}^{\vee}T\right)$, we have 
\[
s_{{}^{\vee}\alpha}^{} \cdot \overline{T_S^{\ast}X} \;=\; -\overline{T_S^{\ast}X},
\]
if and only if $S$ is open in $\pi_{{}^{\vee}\alpha}^{-1}(\pi_{{}^{\vee}\alpha}^{}(S))$. 
Since the open orbit is the only orbit that satisfies this condition for all simple roots ${}^{\vee}\alpha$, the proposition follows.
%$$
%\dim \pi_\alpha^{-1}\left(\pi_\alpha(S)\right)\, =\, 
%\dim S.
%$$
\end{proof}
%The proof is a consequence of the following two propositions.
%The second result put some restrictions on the conormal bundles that can 
%appear in a characteristic cycle.
\begin{prop}{(\cite[Proposition 4.5]{ABM})}\label{prop:CC2}
Let $D \in \mathscr{K}(X(\lambda),{}^{\vee}K)$ be irreducible.
Suppose ${}^{\vee}\alpha \in \tau (D)$ (see \eqref{eq:verticalP}). If 
$\chi_{S}^{\mathrm{mic}} (D) \neq 0$ then $S$ is $s_{{}^{\vee}\alpha}^{}$-vertical. 
That is, all conormal bundles appearing in ${CC}(D)$ have
$s_{{}^{\vee}\alpha}^{}$-vertical orbits. 
\end{prop}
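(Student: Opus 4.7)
The proof plan is to combine the defining property of the $\tau$-invariant, namely $s_{{}^{\vee}\alpha}\cdot D = -D$, with the $W$-equivariance of $CC$ from \eqref{eq:W-equivariant} and the explicit description of the $W$-action on conormal bundles given by Theorem~\ref{theo:Waction}. The strategy is to compute $s_{{}^{\vee}\alpha}\cdot CC(D)$ in two ways and then compare the coefficients of the conormal bundles indexed by $s_{{}^{\vee}\alpha}$-horizontal orbits.

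First I would split the characteristic cycle according to the vertical/horizontal dichotomy:
\[
CC(D) \;=\; \sum_{S\text{ vertical}} \chi_S^{\mathrm{mic}}(D)\, \overline{T_S^{\ast}X(\lambda)} \;+\; \sum_{S\text{ horizontal}} \chi_S^{\mathrm{mic}}(D)\, \overline{T_S^{\ast}X(\lambda)}.
\]
On the one hand, since ${}^{\vee}\alpha \in \tau(D)$, equation \eqref{eq:verticalP} and the $W$-equivariance \eqref{eq:W-equivariant} give
\[
s_{{}^{\vee}\alpha} \cdot CC(D) \;=\; CC(s_{{}^{\vee}\alpha}\cdot D) \;=\; CC(-D) \;=\; -CC(D).
\]
On the other hand, applying $s_{{}^{\vee}\alpha}$ term by term via Theorem~\ref{theo:Waction} produces $-\overline{T_S^{\ast}X(\lambda)}$ for each vertical orbit $S$, and $\overline{T_S^{\ast}X(\lambda)} + \sum_{S'} n_{S,S'}\,\overline{T_{S'}^{\ast}X(\lambda)}$ for each horizontal orbit $S$, with every $S'$ in this correction being $s_{{}^{\vee}\alpha}$-vertical.

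Now I would fix a horizontal orbit $S_0$ and extract the coefficient of $\overline{T_{S_0}^{\ast}X(\lambda)}$ on both sides of the equality $s_{{}^{\vee}\alpha}\cdot CC(D) = -CC(D)$. On the right-hand side this coefficient equals $-\chi_{S_0}^{\mathrm{mic}}(D)$. On the left-hand side, the vertical part contributes nothing to $\overline{T_{S_0}^{\ast}X(\lambda)}$, and the correction terms $\sum_{S'} n_{S,S'}\,\overline{T_{S'}^{\ast}X(\lambda)}$ produced by the horizontal orbits contribute nothing either, since by Theorem~\ref{theo:Waction}(\ref{theo:Waction2}) each $S'$ is vertical and $S_0$ is not. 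Thus the only contribution comes from the leading term of the horizontal block at $S = S_0$, yielding the coefficient $\chi_{S_0}^{\mathrm{mic}}(D)$. Equating gives $\chi_{S_0}^{\mathrm{mic}}(D) = -\chi_{S_0}^{\mathrm{mic}}(D)$, hence $\chi_{S_0}^{\mathrm{mic}}(D) = 0$, which is the desired statement.

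I do not expect a serious obstacle here: the argument is essentially linear algebra in $\mathscr{L}(X(\lambda),{}^{\vee}K)$ once the $W$-equivariance and the Hotta-Tanisaki formulas are granted. The only point that requires a little care, and that drives the whole argument, is the observation that in the expansion \eqref{eq:horizontal} the auxiliary orbits $S'$ are forced to be vertical, so that horizontal basis elements cannot cancel against correction terms coming from other horizontal orbits. This cleanly isolates the coefficient of each $\overline{T_{S_0}^{\ast}X(\lambda)}$ and forces its vanishing.
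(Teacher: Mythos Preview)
The paper does not supply its own proof of this proposition; it is only quoted from \cite[Proposition 4.5]{ABM}. Your argument is correct and is precisely the natural one given the ingredients assembled in Section~3: the $W$-equivariance \eqref{eq:W-equivariant} combined with Theorem~\ref{theo:Waction}, the key point being that in \eqref{eq:horizontal} all correction orbits $S'$ are $s_{{}^{\vee}\alpha}$-vertical, so the coefficient of $\overline{T_{S_0}^{\ast}X(\lambda)}$ for a horizontal $S_0$ is isolated and forced to vanish.
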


\section{The Enhanced Shahidi Conjecture}
Everything is now in place to verify the Enhanced Shahidi Conjecture.  
We start by computing the characteristic cycle of the $D$-module corresponding, under \eqref{eq:RLLC} and \eqref{eq:irredDM}, to an irreducible generic representation.  
The Enhanced Shahidi Conjecture then follows as a direct corollary from it.

We return to the setting of Section~2, removing the regularity assumption on the semisimple element $\lambda$ from the previous section.  
Recall that ${}^{\vee}G(\lambda)$ and ${}^{\vee}P(\lambda)$ were defined just before \eqref{eq:parabolic}, that ${}^{\vee}K$ is one of the fixed point sets introduced in \eqref{eq:K}, and that we denote by $X(\lambda)$ the flag variety ${}^{\vee}G(\lambda)/{}^{\vee}P(\lambda)$. We also write by $S_{\mathrm{open}}$ the open ${}^{\vee}K$-orbit in $X(\lambda)$.

Finally, we recall that a representation \(\pi\) is called \emph{generic} if it admits a Whittaker model. 
It follows from \cite{Kostant78} (see the discussion at the end of \cite{Vogan78}) 
that the irreducible generic representations are precisely those described in \cite[Theorem 6.2]{Vogan78}. 
This fact will play an important role in the proof of the next theorem.
%In other terms, every irreducible generic representation $\pi$ is \emph{large} \cite[Definition~6.1]{Vogan78}, i.e. $\tau(\pi)=\emptyset$.
\begin{theo}\label{theo:ccforlarge}
Let $\xi \in \Xi(X(\lambda),{}^{\vee}K)$ be a complete geometric parameter such that $\pi(\xi)$ is a generic representation. Then
\[
CC(D(\xi))\;=\; \overline{T_{S_\xi}^{\ast}X(\lambda)}  \;=\; \overline{T_{S_\mathrm{open}}^{\ast}X(\lambda)}.
\]
\end{theo}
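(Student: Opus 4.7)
\emph{Reduction to regular $\lambda$.} My plan is to reduce to the case of regular infinitesimal character via the translation principle, and then to pin down both the support and the multiplicity of $CC(D(\xi))$ by combining the ${}^{\vee}W$-equivariance of the characteristic cycle map with the $\tau$-invariant of generic representations. For the reduction, fix a regular $\lambda' \in {}^\vee\mathfrak t$ together with a translation datum $\mathcal T$ from $\mathcal O$ to $\mathcal O'$, and set $\xi' = f_{\mathcal T}^{\ast}(\xi)$. By \eqref{eq:SingularPimic}, $\Psi_{\mathcal T}(\pi(\xi')) = \pi(\xi)$, and the compatibility of the Jantzen-Zuckerman translation functor with Whittaker models ensures that $\pi(\xi')$ is again generic. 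Because the smooth, connected-fibred surjection $f_{\mathcal T}: X(\lambda') \to X(\lambda)$ carries the unique open orbit to the unique open orbit, we have $f_{\mathcal T}^{\ast} S_{\mathrm{open}} = S_{\mathrm{open}}'$, where $S_{\mathrm{open}}'$ denotes the open ${}^\vee K(\lambda')$-orbit in $X(\lambda')$. Granting the theorem at $\lambda'$, formula \eqref{eq:translationofchiV2} then yields $CC(D(\xi)) = \overline{T^{\ast}_{S_{\mathrm{open}}} X(\lambda)}$; combined with the Kostant-Vogan characterization of generic representations recalled in the introduction (which identifies $S_\xi$ with $S_{\mathrm{open}}$), this gives the statement of the theorem.

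\emph{The regular case.} Assume $\lambda$ is regular. A further classical input drawn from Vogan's description \cite[Theorem 6.2]{Vogan78} of generic representations is that every generic $\pi(\xi)$ has empty representation-theoretic $\tau$-invariant; via \eqref{eq:tau-comparison} this translates into
\[
\tau(D(\xi)) = \Delta({}^\vee B, {}^\vee T).
\]
Applying Proposition~\ref{prop:CC2} with each simple root ${}^\vee\alpha$, every ${}^\vee K$-orbit $S$ with $\chi^{\mathrm{mic}}_S(D(\xi)) \ne 0$ must be $s_{{}^\vee\alpha}$-vertical for all simple ${}^\vee\alpha$ simultaneously; by Proposition~\ref{prop:horizontal}, the only such orbit is $S_{\mathrm{open}}$. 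Hence $CC(D(\xi)) = m\,\overline{T^{\ast}_{S_{\mathrm{open}}} X(\lambda)}$ for some integer $m \ge 0$. Because $S_{\mathrm{open}}$ is open in $X(\lambda)$, the perverse sheaf $P(\xi)$ restricts on $S_{\mathrm{open}}$ to $\mathcal V[-\dim X(\lambda)]$, so its contribution to the microlocal multiplicity along $S_{\mathrm{open}}$ is $\mathrm{rank}(\mathcal V)$, giving $m = \mathrm{rank}(\mathcal V)$. Finally, the parameterization of generic representations of quasi-split real forms by nondegenerate Whittaker data forces the local system $\mathcal V$ attached to $\pi(\xi)$ under \eqref{eq:RLLC} to be of rank one, and thus $m = 1$.

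\emph{Main obstacle.} The two decisive non-formal ingredients are (i) the emptiness of the representation-theoretic $\tau$-invariant of a generic $\pi(\xi)$ and (ii) the rank-one nature of the corresponding local system, both of which must be extracted from Vogan's classification \cite[Theorem 6.2]{Vogan78} and Kostant's analysis \cite{Kostant78} of Whittaker models; neither is produced by the geometric machinery of Section~3 alone. A related subtle point is the persistence of genericity under translation (needed to conclude that $\pi(\xi') = \pi(f_{\mathcal T}^{\ast}\xi)$ is itself generic at $\lambda'$), which I would justify by appealing to the compatibility of the Jantzen-Zuckerman translation functors with Whittaker coefficients; failure of any of these three inputs would weaken the clean multiplicity-one form of the statement.
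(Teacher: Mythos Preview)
Your argument is correct and follows essentially the same route as the paper: in the regular case, use \cite[Theorem~6.2]{Vogan78} to get $\tau(\pi(\xi))=\emptyset$, translate via \eqref{eq:tau-comparison} to $\tau(D(\xi))=\Delta({}^\vee B,{}^\vee T)$, and then combine Propositions~\ref{prop:CC2} and~\ref{prop:horizontal} to force the support of $CC(D(\xi))$ onto $\overline{T^{\ast}_{S_{\mathrm{open}}}X(\lambda)}$; the singular case is handled by translation and \eqref{eq:translationofchiV2}.

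Two small remarks where your presentation diverges from the paper. First, the paper justifies the persistence of genericity under translation not by an appeal to ``compatibility of $\Psi_{\mathcal T}$ with Whittaker models'' (which, in the direction you need, is not entirely obvious), but by the equality of Gelfand--Kirillov dimensions $\dim\pi(\xi)=\dim\pi(f^{\ast}_{\mathcal T}\xi)$ coming from \cite[Lemmas~2.3--2.4]{Vogan78}, together with the characterization \emph{generic} $\Leftrightarrow$ \emph{large} $\Leftrightarrow$ $\dim\pi=|R^{+}(G,T)|$; this is sharper and directly citable. Second, your concern about the multiplicity $m$ is unnecessary in the real case: the ${}^\vee K$-equivariant fundamental groups are abelian $2$-groups, so every irreducible equivariant local system has rank~one, and $m=1$ follows from \cite[Lemma~19.14(a)]{ABV} without invoking the Whittaker-datum parameterization.
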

%Using the two preceding propositions, we now present the proof of Theorem~\ref{theo:ccforlarge}.
%We can now give the proof of Theorem \ref{theo:ccforlarge}.
%By definition of a large
%representation (see \cite[Definition 6.1]{Vogan78}), we have 
%$\tau(\pi)=\emptyset$ and 
\begin{proof}%[Proof of Theorem \ref{theo:ccforlarge}]
Let $\xi \in \Xi(X(\lambda),{}^{\vee}K)$ be a complete geometric parameter such that $\pi(\xi)$ is generic.
We begin by treating the case where \(\lambda\) is regular.
%We start by treating the case of $\lambda$ regular. 
%$\pi$ be a generic representation.  
%It follows from \cite{Kostant78} (see also the paragraph at the end of \cite{Vogan78})  
%that every generic representation is large 
%As noted above, \cite[Definition~6.1]{Vogan78} gives 

By \cite[Theorem 6.2]{Vogan78}, the representation $\pi(\xi)$ is \emph{large}, 
that is, its $\tau$–invariant is empty:
\(
\tau(\pi(\xi)) = \emptyset.
\)
It then follows from \eqref{eq:tau-comparison}, that
\[
\tau(D(\xi)) = \Delta\left({}^{\vee}B,{}^{\vee}T\right).
\]
%As mentioned above, according to \cite[Definition~6.1]{Vogan78}, we have $\tau(\pi(\xi))=\emptyset$.  
%Then by \textcolor{red}{\cite[Corollary~14.9(b)]{ICIV}}, we obtain $\tau(D(\xi))=\Delta$.  
Therefore, by Proposition~\ref{prop:CC2}, the conormal bundle 
$\overline{T_S^{\ast}X(\lambda)}$ to the ${}^{\vee}K$–orbit $S\subset X(\lambda)$ appears in $CC(D(\pi))$  
if and only if $S$ is $\alpha$-vertical for all simple roots $\alpha$.  
Since, by Proposition~\ref{prop:horizontal}, this condition is satisfied only by  
the open orbit $S_{\mathrm{open}}$, we conclude that  
\begin{align}\label{eq:ccofgeneric}
CC(D(\xi))\;=\; \overline{T_{S_{\mathrm{open}}}^{\ast}X(\lambda)}.
\end{align}
%In other words, we now allow the ${}^{\vee}G$–orbit ${}^{\vee}\mathcal{O}$ of 
To complete the proof, we remove the regularity assumption on the infinitesimal character, allowing the semisimple element 
\(\lambda \in {^\vee}\mathfrak{g}\) to correspond to a singular infinitesimal character. 
Fix \(\lambda'\) verifying equations \eqref{eq:lambdaprime} and
\eqref{eq:translationdatum1}, and consider the translation datum 
\(\mathcal{T}\) from the \({^\vee}G\)–orbit of \(\lambda\) to the \({^\vee}G\)–orbit of \(\lambda'\).

For any irreducible representation $\pi$ 
 we denote by  
$\dim \pi$ its Gelfand–Kirillov dimension. For the definition of $\dim \pi$  
and a detailed account of its properties, we refer the reader to \cite{Vogan78}.  
What is of interest to us here is that,  as explained in 
\cite[p.~84, after the proof of Corollary~4.7]{Vogan78}, it follows from 
\cite[Lemmas~2.3 and~2.4]{Vogan78} that, for any \(\xi \in \Xi(X(\lambda),{}^{\vee}K)\), one has
\begin{equation}\label{eq:GKdimEQUALITY}
\dim \pi(\xi) \, =\,  \dim \pi(f^{*}_{\mathcal{T}}\xi).
\end{equation}
In particular, 
$\dim \pi(\xi) \, =\,  |R^+(G,T)|$  
if and only if $\dim \pi(f^{*}_{\mathcal{T}}\xi)\, =\,  |R^+(G,T)|$. 
Therefore, by \cite[Theorem 6.2]{Vogan78}, the irreducible representation 
$\pi(\xi)$ is large, and hence generic, if and only if $\pi(f^{*}_{\mathcal{T}}\xi)$ is large, and hence generic.
%the irreducible representation  
%$\pi(\xi)$ is large if and only if $\pi(f^{*}_{\mathcal{T}}\xi)$ is large.  
%Equivalently, $\pi(\xi)$ is generic if and only if $\pi(f^{*}_{\mathcal{T}}\xi)$ is generic.  
Now, since $f^{*}_{\mathcal{T}}S_\mathrm{open}$ is open in $X(\lambda')$, it follows from \eqref{eq:ccofgeneric} that, if $\pi(f^{*}_{\mathcal{T}}\xi)$ is generic, we obtain  
\[
CC(D(f^{*}_{\mathcal{T}}\xi)) \;=\; \overline{T_{f^{*}_{\mathcal{T}}S_\mathrm{open}}^{\ast}X(\lambda')},
\]
and by Equation~\eqref{eq:translationofchiV2}, we deduce
\begin{align*}
CC(D(\xi)) 
&= \chi_{f^{\ast}_{\mathcal{T}}S_{\mathrm{open}}}^{\mathrm{mic}}(D(f^{\ast}_{\mathcal{T}}\xi)) 
   \cdot \overline{T^{\ast}_{S_{\mathrm{open}}}X(\lambda)} \\
&= \overline{T^{\ast}_{S_{\mathrm{open}}}X(\lambda)}.
\end{align*}
\end{proof}
As a corollary of Theorem~\ref{theo:ccforlarge},  
we obtain the Enhanced Shahidi Conjecture.

\begin{theo}\label{theo:ESC}
Let $S$ be a ${}^{\vee}K$–orbit in $X(\lambda)$. Then the micro-packet $\Pi_S^{\mathrm{mic}}(G/\R)$ has a generic member if and only if
\[
S = S_{\mathrm{open}}.
\]
In particular, for any A-parameter $\psi$, the A-packet $\Pi_\psi(G(\R,\delta_q))$ 
has a generic member if and only if $\psi|_{\mathrm{SL}_2}$ is trivial. 
%In other terms,if and only if it is a tempered $L$-packet.
%it has a generic member. 
\end{theo}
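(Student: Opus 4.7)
The plan is to derive both assertions of Theorem~\ref{theo:ESC} from Theorem~\ref{theo:ccforlarge} together with the standard dictionary linking temperedness of $L$-parameters to openness of the corresponding orbit.

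For the first equivalence, the forward direction is essentially immediate. If $\Pi_S^{\mathrm{mic}}(G/\R)$ contains a generic representation $\pi(\xi)$, then the definition \eqref{mpacket} of a micro-packet forces $\chi_S^{\mathrm{mic}}(D(\xi))\neq 0$; but Theorem~\ref{theo:ccforlarge} asserts $CC(D(\xi))=\overline{T^{\ast}_{S_{\mathrm{open}}}X(\lambda)}$, so $S_{\mathrm{open}}$ is the unique orbit with nonzero microlocal multiplicity, and hence $S = S_{\mathrm{open}}$. For the converse, I would invoke \cite[Lemma~19.14(a)]{ABV} (quoted in the introduction), which identifies $\Pi^{\mathrm{mic}}_{S_{\mathrm{open}}}(G/\R)$ with the ordinary $L$-packet $\Pi_{S_{\mathrm{open}}}(G/\R)$; the Kostant--Vogan theorem, as formulated through \cite[Proposition~1.11]{ABV}, \cite{Kostant78} and \cite{Vogan78}, then supplies a generic member of this $L$-packet.

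For the corollary, I would first recall that in the ABV framework, the A-packet $\Pi_\psi(G(\R,\delta_q))$ is by construction the quasi-split component of the micro-packet $\Pi^{\mathrm{mic}}_{S_\psi}(G/\R)$, where $S_\psi$ is the orbit attached to the $L$-parameter
\[
\varphi_\psi(w) \;=\; \psi\!\left(w,\; \mathrm{diag}(|w|^{1/2}, |w|^{-1/2})\right).
\]
Invoking the first equivalence, the claim reduces to showing that $S_\psi$ is open if and only if $\psi|_{\mathrm{SL}_2}$ is trivial. For this, one uses the standard geometric fact that a ${}^{\vee}G$-orbit in $X({}^\vee G^\Gamma)$ is open precisely when the corresponding $L$-parameter has bounded image: if $\psi|_{\mathrm{SL}_2}$ is trivial, then $\varphi_\psi$ coincides with $\psi|_{W_\R}$, whose image is bounded by the definition of an A-parameter; otherwise the factor $\mathrm{diag}(|w|^{1/2}, |w|^{-1/2})$ contributes an unbounded direction to $\varphi_\psi$, destroying temperedness. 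The main obstacle in the proof is really the characteristic cycle computation already packaged in Theorem~\ref{theo:ccforlarge}; the remaining work consists of unwinding the definition of a micro-packet and applying the classical tempered-iff-open dictionary for orbits in the ABV parameter variety $X(\lambda)$ built from the parabolic structure \eqref{eq:parabolic}.
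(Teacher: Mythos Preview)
Your argument for the first equivalence is correct and matches the paper's: the forward direction follows from Theorem~\ref{theo:ccforlarge} and the definition \eqref{mpacket}, while for the converse you invoke \cite[Lemma~19.14]{ABV} to collapse the micro-packet to the $L$-packet and then apply the Kostant--Vogan criterion via \cite[Proposition~1.11]{ABV}. (The paper also offers a second, self-contained proof of the converse using the ${}^{\vee}W$-action, but your route is the one the paper gives first.)

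The gap is in the A-parameter corollary. You assert as a ``standard geometric fact'' that a ${}^{\vee}G$-orbit in $X({}^\vee G^\Gamma)$ is open precisely when the corresponding $L$-parameter has bounded image. Only one implication is true: a tempered $L$-parameter yields an open orbit (its $L$-packet contains a generic member). The converse fails already for $\GL_2(\R)$: at a regular non-integral infinitesimal character the variety $X(\lambda)$ is a single point, so the unique orbit is open, yet the corresponding principal-series $L$-parameter has unbounded image. Thus your chain
\[
\psi|_{\mathrm{SL}_2}\text{ nontrivial}\ \Longrightarrow\ \varphi_\psi\text{ not tempered}\ \Longrightarrow\ S_\psi\text{ not open}
\]
breaks at the second arrow. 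Your argument does establish the other direction ($\psi|_{\mathrm{SL}_2}$ trivial $\Rightarrow$ $\varphi_\psi$ tempered $\Rightarrow$ $S_\psi$ open), since ``tempered $\Rightarrow$ open'' is valid.

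The paper avoids temperedness altogether and argues geometrically: by \cite[(22.8)(e)--(f)]{ABV} one has $d(\psi|_{\mathrm{SL}_2})(e)\in T^{*}_{S_\psi,x}X(\lambda)$; if $S_\psi$ is open the conormal fibre is zero, forcing $d(\psi|_{\mathrm{SL}_2})$ to vanish on the standard $\mathfrak{sl}_2$-triple and hence $\psi|_{\mathrm{SL}_2}$ to be trivial. For the reverse direction it cites \cite[Proposition~22.9(b)--(c)]{ABV}. You should replace the ``open $\Leftrightarrow$ bounded'' claim by one of these arguments.
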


\begin{proof}
By definition, $\pi \in \Pi_S^{\mathrm{mic}}(G/\R)$ if and only if $\chi_{S}^{\mathrm{mic}}(\pi)\neq 0$.  
If, in addition, $\pi$ is generic, then by Theorem~\ref{theo:ccforlarge} we have  
$\chi_{S}^{\mathrm{mic}}(\pi)\neq 0$ if and only if $S = S_{\mathrm{open}}$.

Conversely, suppose $S$ is open. According to \cite[Lemma~19.14]{ABV},  
if $\pi \in \Pi_S^{\mathrm{mic}}(G/\R)$, then the ${}^{\vee}K$-orbit $S_\pi$ corresponding to L-parameter of $\pi$ satisfies $S \subset \overline{S_\pi}$.  
Therefore $S = S_\pi$ for all $\pi \in \Pi_S^{\mathrm{mic}}(G/\R)$,  
and the micro–packet $\Pi_S^{\mathrm{mic}}(G/\R)$ coincides with the L-packet $\Pi_S(G/\R)$. Moreover, as explained at the end of \cite[p.~19]{ABV},  
it follows from \cite[Proposition~1.11]{ABV}, \cite[Theorem~6.2]{Vogan78},  
and \cite{Kostant78} that the orbit $S$ is open if and only if  
the corresponding L-packet has a generic member.

Let us provide an alternative proof of the converse implication, 
which does not rely on \cite[Proposition 1.11]{ABV}. 
Suppose again that $S$ is open in $X(\lambda)$, and consider the complete geometric parameter 
\[
\xi = (S, \underline{\mathbb{C}}_{S}),
\] 
where $\underline{\mathbb{C}}_{S}$ denotes the constant sheaf on $S$. 
Since the closure of $S$ is smooth, it follows from the Decomposition Theorem for semi-small maps \cite[Theorem 3.4.1]{deCataldoMigliorini}, applied to the identity map, that
\[
P(\xi) = \underline{\mathbb{C}}_{X(\lambda)},
\]
Consequently, by \cite[Proposition~5.3.2]{Kashiwara-Schapira}, we have
\[
CC(P(\xi)) = \overline{T_S^{\ast} X(\lambda)}.
\]
Suppose for the moment that \(\lambda\) is regular. 
Since \(S\) is open,
it is \(s_{{}^{\vee}\alpha}^{}\)-vertical for all simple roots \({{}^{\vee}\alpha} \).
Hence, from 
\eqref{eq:W-equivariant} and \eqref{eq:vertical} , 
for all ${{}^{\vee}\alpha} \in \Delta\left({}^{\vee}B,{}^{\vee}T\right)$, we have
\begin{align*}
CC(s_{{}^{\vee}\alpha}^{}\ \cdot 	P(\xi)) 
  \, =\,  s_{{}^{\vee}\alpha}^{} \cdot CC(P(\xi)) 
 \,  =\,  -\,\overline{T_S^{\ast} X(\lambda)}.
\end{align*}
According to \cite[Lemma 14.7]{ICIV}, if \(s_{{}^{\vee}\alpha}^{} \cdot P(\xi) \neq -P(\xi)\),
then \(s_{{}^{\vee}\alpha}^{} \cdot P(\xi)\) is a linear combination with nonnegative integer coefficients of irreducible perverse sheaves. 
As a consequence, all conormal bundles must appear 
in \(CC(s_{{}^{\vee}\alpha}^{} \cdot P(\xi))\) with nonnegative coefficients. 
Since this is not the case, we deduce that
\(
s_{{}^{\vee}\alpha}^{} \cdot P(\xi) = -P(\xi)\), for all \({{}^{\vee}\alpha} \in \Delta\left({}^{\vee}B,{}^{\vee}T\right)\).
Therefore, \(\tau(P(\xi)) = \Delta\left({}^{\vee}B,{}^{\vee}T\right)\), and from \eqref{eq:tau-comparison} we obtain that 
\(\tau(\pi(\xi)) = \emptyset\). By \cite[Theorem~6.2]{Vogan78}, this implies that 
\(\pi(\xi)\) is generic.

%Next, we remove 
We now proceed to remove the regularity hypothesis on $\lambda$. 
Fix a translation datum $\mathcal{T}$ from the \({^\vee}G\)-orbit of \(\lambda\) to the \({^\vee}G\)-orbit of a regular element $\lambda'$
satisfying
\eqref{eq:lambdaprime} and \eqref{eq:translationdatum1}.
Let $S'$ be the open orbit in $X(\lambda')$. 
Then, by the description of the image of $f^{\ast}_{\mathcal{T}}$ in \cite[page 95]{ABV}, we deduce
$$
f^{\ast}_{\mathcal{T}}(S,\underline{\mathbb{C}}_{S})\, =\, 
(S',\underline{\mathbb{C}}_{S'}). 
$$
Since $\pi(S',\underline{\mathbb{C}}_{S'})$ is generic, it follows from \eqref{eq:GKdimEQUALITY} and \cite[Theorem 6.2]{Vogan78} that the same holds for 
$\pi(S,\underline{\mathbb{C}}_{S})$, completing the proof of the first part of the theorem.

%To finish the proof, 
Recall that, by~\cite[Section~4]{Arthur89},  
to any A-parameter \( \psi \) there corresponds an L-parameter  
\( \varphi_\psi \), and therefore, under~\eqref{eq:KorbitToLmorphismBij},  
a \( {}^{\vee}K \)-orbit \( S_\psi \subset X(\lambda) \).  
Let \( x \in X(\lambda) \) be such that  
\( S_\psi = {}^{\vee}K \cdot x \).  
Then the statement of the theorem concerning the Enhanced Shahidi Conjecture follows from the description in  
\cite[(22.8)(a)–(f)]{ABV} of the A-parameter \( \psi \) of \( G \). 
Indeed, let
\[
h = 
\begin{pmatrix}
1 & 0 \\
0 & -1
\end{pmatrix},
\quad
e =
\begin{pmatrix}
0 & 1 \\
0 & 0
\end{pmatrix},
\quad
f =
\begin{pmatrix}
0 & 0 \\
1 & 0
\end{pmatrix},
\]
denote the standard basis of $\mathfrak{sl}_2(\mathbb{C})$. 
Then, by \cite[(22.8)(e),(f)]{ABV},
we have
$$
d(\psi|_{\mathrm{SL}_2})(e)\in T_{S_\psi,x}^{\ast}X(\lambda).
$$ 
If, in addition, $S_\psi$ is open, then $T^{*}_{S_\psi, x} X(\lambda) = 0$, and hence
$d(\psi|_{\mathrm{SL}_2})(e)=d(\psi|_{\mathrm{SL}_2})(f)=d(\psi|_{\mathrm{SL}_2})(h)=0$, which implies that $\psi|_{\mathrm{SL}_2}$ is trivial. 
Conversely, if \( \psi|_{\mathrm{SL}_2} \) is trivial, then the fact that  
\( S_\psi \) is open follows from~\cite[Proposition~22.9(b)–(c)]{ABV}  
(see, for example,~\cite[Proposition~4.6]{arancibia_characteristic} for further details). 
Hence, the Enhanced Shahidi Conjecture follows directly from the first statement of the theorem.
%Thus, 
%the final statement of the theorem
%follows directly from the first statement of the theorem.
% follows as a direct consequence of the first.
%the theorem's final statement 
%follows as a direct consequence of the first one. 
%is then a direct consequence of the first one. 
%we just need to verify
%is a consequence of the following Lemma.

\end{proof}

%As a consequence of the theorem, whenever \( \Pi_S^{\mathrm{mic}}(G(\R,\delta_q)) \) contains a generic member, it reduces to the corresponding L-packet $\Pi_S(G(\R,\delta_q))$. 
%As a final remark, we notice that 
%in the cours of proving Theorem \ref{theo:ESC},
%the ${}^{\vee}W$-action has being used to give an alternative proof of the fact 
%us a different method of proving 
%that a tempered L-packet $\Pi_varphi(G(\R,\delta_q)) $ contain at least one generic member, namely $%\pi(\xi)$ with $\xi=(S_\varphi,\overline{C}_{S_\varphi})$.
%We believe the ${}^{\vee}W$-action could be exploited even more in the purpose of decribing micro-packet.
As a consequence of the theorem, whenever \( \Pi_S^{\mathrm{mic}}(G(\mathbb{R},\delta_q)) \) contains a generic member, it reduces to the corresponding L-packet \( \Pi_S(G(\mathbb{R},\delta_q)) \).  
As a final remark, we note that, in the course of proving Theorem~\ref{theo:ESC},  
the \( {}^{\vee}W \)-action has been used to provide an alternative proof of the fact that a tempered 
L-packet \( \Pi_{\varphi}(G(\mathbb{R},\delta_q)) \) contains at least one generic member—namely, \( \pi(\xi) \) with \( \xi = (S_\varphi, \underline{\C}_{S_\varphi}) \).  
We believe that the \( {}^{\vee}W \)-action could be further exploited for the purpose of describing micro-packets.

%could play an important role 
%the fact that
%We finish by pointing
%Mout that in the cours of proving Theorem \ref
%we have given a different proof 

\bibliographystyle{alpha}
\bibliography{reference}
\end{document}